\newcommand{\Sum}{\displaystyle \sum}
\newcommand{\R}{{\mathbb R}}
\newcommand{\N}{{\mathbb N}}
\newcommand{\Z}{{\mathbb Z}}
\def\Supp{\,\hbox{\rm Supp}\,}
\def\d{\partial}
\def\ep{\varepsilon}
\def\Id{{\rm Id}\,}
\def\ddj{\dot\Delta_j}
\def\cF{{\mathcal F}}
\def\cC{{\mathcal C}}
\def\cS{{\mathcal S}}
\def\cM{{\mathcal M}}
\let\tilde=\widetilde
\begin{document}  

\title*{New maximal regularity results for the heat equation in exterior  domains, and applications}
\titlerunning{Maximal regularity  for the heat equation in exterior domains}
\author{R. Danchin  and P. B. Mucha}
\institute{Rapha\"el Danchin \at Universit\'e Paris-Est, LAMA, UMR 8050 and Institut Universitaire de France,
 61 avenue du G\'en\'eral de Gaulle,
94010 Cr\'eteil Cedex, France, \email{raphael.danchin@u-pec.fr}
\and Piotr Bogus\l aw Mucha \at Instytut Matematyki Stosowanej i Mechaniki,
 Uniwersytet Wars\-zawski, 
ul. Banacha 2,  02-097 Warszawa, Poland. \email{p.mucha@mimuw.edu.pl}}
%
%
\maketitle

\abstract{This paper is dedicated to the proof  of new maximal regularity results involving Besov spaces
for the heat equation in the half-space or in bounded or exterior domains of $\R^n.$ 
We strive for \emph{time independent} a priori estimates in regularity spaces of type $L_1(0,T;X)$ where
$X$ stands for some \emph{homogeneous}  Besov space. In the case of bounded domains, 
the results that we get are similar to those of the whole space or of the half-space.
For exterior domains, we need to use mixed Besov norms in order to get 
a control on the low frequencies. Those estimates are crucial for proving 
global-in-time results for nonlinear heat equations in a critical functional framework.}

\section*{Introduction}

We are concerned with the proof of maximal regularity estimates for the heat equation
with Dirichlet boundary conditions, namely, 
\begin{equation}\label{eq:heat}
 \begin{array}{lcr}
 u_t-\nu \Delta u =f \qquad & \mbox{in} & (0,T)\times\Omega,\\
 u=0&\mbox{at} &(0,T)\times\d\Omega,\\
u = u_0 &  \mbox{on} & \Omega
\end{array}
\end{equation}
in various domains $\Omega$ of $\R^n$ ($n\geq2$). 
\medbreak
We are interested in $L_1$-in-time estimates for the solutions to \eqref{eq:heat} with a gain of two full spatial derivatives with respect to the data, that is
\begin{equation}\label{eq:maxregX}
\|u_t,\nu\nabla^2 u\|_{L_1(0,T;X)}\leq C\bigl(\|u_0\|_X+\|f\|_{L_1(0,T;X)}\bigr)
\end{equation}
with a constant  $C$ \emph{independent of $T.$} 
\medbreak
Such time independent estimates  are of importance not only for  the heat semi-group theory but
also in the applications. Typically, they are crucial for proving global existence and uniqueness 
statements for nonlinear heat equations with small data in a critical functional framework. 
Moreover, the fact that two full derivatives may be gained with respect to the source term
allows to consider not only the $-\Delta$  operator but also small perturbations of it.
In addition, we shall see below that it is possible  to choose $X$ in such a way that
the constructed solution $u$ is $L_1$-in-time with values in 
the set of Lipschitz functions. Hence, if the considered
nonlinear heat equation determines the velocity field of some fluid then 
this velocity field admits a unique Lipschitzian flow for all time.
The model may thus be reformulated equivalently in  Lagrangian variables
(see e.g.  our recent work \cite{DM-cpam} in the slightly different context of incompressible flows).
This is obviously of  interest to investigate free boundary problems. 
\bigbreak
Let us recall however that estimates such as \eqref{eq:maxregX} are  false  if $X$ is any 
reflexive Banach space, hence in particular if $X$ is a Lebesgue or Sobolev space (see e.g. \cite{DHP}). 
On the other hand, it is well known that \eqref{eq:maxregX}  holds true
in the whole space $\R^n$  if $X$ is a homogeneous Besov space \emph{with third index $1.$}
Let us be more specific. 
Let us fix some homogeneous Littlewood-Paley decomposition $(\ddj)_{j\in\Z}$
(see the definition in the next section) and denote
by  $(e^{\alpha\Delta})_{\alpha>0}$ the heat semi-group over $\R^n.$
Then it is well known  (see e.g. \cite{BCD}) that  there exist two constants $c$ and $C$
such that for all $j\in\Z$ and $\alpha\in\R^+$ one has
\begin{equation}\label{eq:ddj}
\|e^{\alpha\Delta}\ddj h\|_{L_p(\R^n)}\leq Ce^{-c\alpha2^{2j}}\|\ddj h\|_{L_p(\R^n)}.
\end{equation}
Hence if $u$ satisfies \eqref{eq:heat} then one may write
$$
\ddj u(t)=e^{\nu t\Delta}\ddj u_0+\int_0^te^{\nu(t-\tau)\Delta}\ddj f\,d\tau.
$$
Therefore, taking advantage of \eqref{eq:ddj}, we discover that
$$
\|\ddj u(t)\|_{L_p(\R^n)}\leq C\biggl(e^{-c\nu t2^{2j}}\|\ddj u_0\|_{L_p(\R^n)}+\int_0^t
e^{-c\nu(t-\tau)2^{2j}}\|\ddj f\|_{L_p(\R^n)}\,d\tau\biggr),
$$
whence
$$\displaylines{\quad\|\ddj u\|_{L_\infty(0,T;L_p(\R^n))}+
\nu2^{2j}\|\ddj u\|_{L_1(0,T;L_p(\R^n))}\hfill\cr\hfill\leq C\bigl( \|\ddj u_0\|_{L_p(\R^n)}
+\|\ddj f\|_{L_1(0,T;L_p(\R^n))}\bigr).}
$$
Multiplying the inequality by $2^{js}$ and summing up over $j,$
we thus eventually get  for some  absolute constant $C$ independent of  $\nu$ and $T,$
\begin{multline}\label{w2}
\|u\|_{L_\infty(0,T;\dot B^s_{p,1}(\R^n))}+
 \|u_t,\nu\nabla^2 u\|_{ L_1(0,T;\dot B^s_{p,1}(\R^n))}\\
\leq C(\|f\|_{ L_1(0,T;\dot B^s_{p,1}(\R^n))}+\|u_0\|_{\dot B^s_{p,1}(\R^n)}),
\end{multline}
where  the homogeneous Besov semi-norm that is used in the above
inequality is defined by
$$
\|u\|_{\dot B^s_{p,1}(\R^{n})}:=\sum_{j\in\Z} 2^{sj}\|\ddj u\|_{L_p(\R^{n})}.
$$
{}From this and the definition of  homogeneous Besov space $\dot B^s_{p,1}(\R^n)$
(see Section \ref{s:tools}), we easily deduce  the following classical result:
\begin{theorem}\label{th:whole}
 Let $p\in[1,\infty]$ and $s\in\R$. Let  $f \in L_1(0,T;\dot B^s_{p,1}(\R^n))$
and $u_0\in \dot B^s_{p,1}(\R^n)$.
Then \eqref{eq:heat}   with $\Omega=\R^n$ has a unique solution $u$ in 
$$
\cC([0,T);\dot B^s_{p,1}(\R^n))\quad\hbox{with}\quad 
\d_tu,\nabla^2u\in L_1(0,T;\dot B^s_{p,1}(\R^n))
$$
and \eqref{w2} is satisfied. 
\end{theorem}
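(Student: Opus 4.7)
The computation displayed just before the statement already contains the heart of the matter: once we have a representation for $u$ by Duhamel's formula, the dyadic bound \eqref{eq:ddj} yields the block estimate on $\ddj u$, and summation with weights $2^{js}$ produces \eqref{w2}. My plan is therefore to treat the theorem as an existence/uniqueness/regularity packaging around this already--established a priori estimate. First, for data $(u_0,f)$ in the stated class, I would construct an approximate sequence by frequency truncation, e.g.\ by replacing $(u_0,f)$ with $(u_0^N,f^N):=(\sum_{|j|\le N}\ddj u_0,\sum_{|j|\le N}\ddj f)$. Each pair lies in a classical Sobolev space, so the standard heat semi-group theory produces a smooth solution $u^N$ given explicitly by $u^N(t)=e^{\nu t\Delta}u_0^N+\int_0^t e^{\nu(t-\tau)\Delta}f^N(\tau)\,d\tau$.

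Next, I would run the dyadic argument already written in the introduction on each $u^N$ to obtain estimate \eqref{w2} with a constant $C$ independent of $N$, $\nu$ and $T$. Linearity, together with the same bound applied to $u^N-u^M$, shows that $(u^N)$ is a Cauchy sequence in the space appearing on the left-hand side of \eqref{w2}, so it converges to some $u$ in that norm, and $u$ clearly satisfies \eqref{eq:heat} in the sense of distributions and the bound \eqref{w2}. Uniqueness is immediate from the linearity and \eqref{w2} applied to the difference of two solutions with data $(0,0)$.

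For the continuity in time with values in $\dot B^s_{p,1}(\R^n)$, I would argue block by block: each $\ddj u$ is continuous in time with values in $L_p$ by elementary properties of convolution with the heat kernel and of $f\mapsto\int_0^t e^{\nu(t-\tau)\Delta}\ddj f\,d\tau$, and the series $\sum_j 2^{js}\|\ddj u(t)\|_{L_p}$ is uniformly summable in $t$ because its partial sums are dominated by the right-hand side of \eqref{w2}. Hence the series is a normally convergent series of continuous functions, giving $u\in\cC([0,T);\dot B^s_{p,1}(\R^n))$.

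The genuinely delicate point, and the one I would expect to absorb most of the care, is the functional--analytic setting of $\dot B^s_{p,1}(\R^n)$ itself: homogeneous Besov spaces are naturally defined modulo polynomials, and one needs the parameters $s$ and $p$ to be such that the realization of $\dot B^s_{p,1}(\R^n)$ as a genuine space of tempered distributions (and not only a space of equivalence classes) is available, so that the Duhamel formula and the distributional formulation of \eqref{eq:heat} make sense unambiguously. I would rely for this on the conventions and embeddings recalled in Section \ref{s:tools}, which is where the authors presumably fix the precise admissible range and the interpretation of the initial datum; with that framework in hand the construction above is routine.
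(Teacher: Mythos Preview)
Your proposal is correct and follows exactly the approach of the paper: the paper's argument consists solely of the dyadic computation leading to \eqref{w2} displayed just before the theorem, after which it declares the result classical and ``easily deduced'' from that estimate and the definition of $\dot B^s_{p,1}(\R^n)$ in Section~\ref{s:tools}. Your write-up simply supplies the routine packaging (frequency-truncated approximation, Cauchy sequence via linearity, uniqueness from the estimate, and continuity in time by normal convergence of the dyadic series) that the paper leaves implicit, and you correctly flag that the only genuine subtlety---the realization of homogeneous Besov spaces as spaces of distributions rather than equivalence classes modulo polynomials---is handled by the $\cS'_h$ convention adopted in Section~\ref{s:tools}.
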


The present paper is mainly devoted to generalizations of Theorem \ref{th:whole} to 
the half-space, bounded or exterior domains (that is the complement of bounded
simply connected domains), and applications to the global solvability 
of nonlinear heat equations. 
\medbreak
Proving maximal regularity estimates for general domains  essentially relies on Theorem \ref{th:whole} and  localization
techniques. More precisely, after localizing the equation thanks to a suitable resolution of unity, 
one has to estimate  ``interior  terms'' with support that do not intersect the boundary of 
$\Omega$ and ``boundary  terms'' the support of which meets $\d\Omega.$
In order to prove  \emph{interior estimates} that is bounds for  the interior terms, it suffices to resort to the theorem in the whole space, 
Theorem \ref{th:whole}, for those interior terms satisfy \eqref{eq:heat} (with suitable data)
once extended by zero onto the whole space. 
In contrast, the extension of the boundary terms by zero does not
satisfy \eqref{eq:heat} on $\R^n.$ However,  performing a change of variable
reduces their study to that of \eqref{eq:heat} on the half-space $\R^n_+.$
Therefore, proving maximal regularity estimates  in general domains 
mainly relies on such estimates on $\R^n$ and on $\R^n_+.$ As a matter of fact, we shall
see that the latter case stems from the former, by symmetrization, \emph{provided  
 $s$ is close enough to $0$}. 
In the case of a general domain, owing to change of variables and localization however, 
we shall obtain \eqref{w2} either \emph{up to low order terms} or 
with a \emph{time-dependent} constant $C.$ 
In a bounded domain, it turns out that Poincar\'e inequality (or equivalently the fact that 
the Dirichlet Laplacian operator has eigenvalues bounded away from $0$) allows
to prove an exponential decay which is sufficient to cancel out those lower order terms. 
In the case of an exterior domain, that decay turns out to be only algebraic
(at most $t^{-n/2}$ in dimension $n$). As a consequence, absorbing the lower
order terms will enforce us to use  mixed Besov norms and to assume 
that  $n\geq3.$

\bigbreak

The paper unfolds as follows. The basic tools for our analysis (Besov spaces on domains, product estimates,
embedding results)
are presented in the next section.
In Section \ref{s:maximal} we prove maximal regularity estimates 
similar to those of Theorem \ref{th:whole} first in 
the half-space and next in exterior or bounded domains.  
As an application, in the last section,  we establish global existence results for nonlinear heat equations with
small data  in a critical functional framework.


\section{Tools}\label{s:tools}

In this section, we introduce the main functional spaces and (harmonic analysis) tools 
that will be needed in this paper.

\subsection{Besov spaces on the whole space}

Throughout we fix a smooth nonincreasing radial function $\chi:\R^n\rightarrow[0,1]$
supported in $B(0,1)$ and such that $\chi\equiv1$ on $B(0,1/2),$ and set 
$\varphi(\xi):=\chi(\xi/2)-\chi(\xi).$
Note that this implies that  
$\varphi$ is valued in $[0,1],$
supported in $\{1/2\leq r\leq 2\}$ and that 
\begin{equation}\label{eq:phi}
\sum_{k\in\Z}\varphi(2^{-k}\xi)=1\quad\hbox{for all}\quad \xi\not=0.
\end{equation}
Then  we introduce the homogeneous Littlewood-Paley
decomposition  $(\dot\Delta_k)_{k\in\Z}$ over $\R^{n}$
by setting
$$\dot\Delta_ku:=\varphi(2^{-k}D)u=
{\mathcal F}^{-1}\bigl(\varphi(2^{-k}\cdot){\mathcal F}u\bigr).
$$
Above  $\cF$ stands for the Fourier transform on $\R^{n}.$
We also define  the low frequency cut-off $\dot S_k:=\chi(2^{-k}D).$

In order to  define Besov spaces on $\R^n,$ we first introduce the
following homogeneous semi-norms and nonhomogeneous Besov norms
(for all $s\in\R$ and $(p,r)\in[1,\infty]^2$): 
$$\begin{array}{lll}
\|u\|_{\dot B^s_{p,r}(\R^{n})}&:=&
\bigl\|2^{sk}\|\dot\Delta_ku\|_{L_p(\R^{n})}\bigr\|_{\ell_r(\Z)}\\[1ex]
\|u\|_{B^s_{p,r}(\R^{n})}&:=&
\bigl\|2^{sk}\|\dot\Delta_ku\|_{L_p(\R^{n})}\bigr\|_{\ell_r(\N)}
+\|\dot S_0u\|_{L_p(\R^n)}.\end{array}
$$
The nonhomogeneous Besov space $B^s_{p,r}(\R^n)$ is the set
of tempered distributions $u$ such that $\|u\|_{B^s_{p,r}(\R^n)}$ is finite. 
Following \cite{BCD}, we define the homogeneous Besov space  $\dot B^s_{p,r}(\R^{n})$ as 
 $$
 \dot B^s_{p,r}(\R^{n})=\left\{u\in{\cS}'_h(\R^{n})\,:\,
 \|u\|_{\dot B^s_{p,r}(\R^{n})}<\infty\right\},
 $$
 where ${\mathcal S}'_h(\R^{n})$ stands for
  the set of tempered distributions $u$ over $\R^{n}$
  such that for all smooth compactly supported function $\theta$
  over $\R^{n},$ we have
  $\lim_{\lambda\rightarrow+\infty}\theta(\lambda D)u=0$ in 
  $L_\infty(\R^{n}).$
  Note that  any distribution  $u\in{\cS}'_h(\R^n)$ satisfies
  $u=\sum_{k\in\Z}\dot\Delta_ku$ in $ {\cS}'_h(\R^n).$

\bigbreak
We shall make an extensive use of the following result (see the proof in e.g.  \cite{BCD,DM-ext}):
\begin{proposition}\label{p:product} Let $b^s_{p,r}$ denote
$\dot B^s_{p,r}(\R^n)$ or $B^s_{p,r}(\R^n).$ 
Then the following a priori estimates hold true:
\begin{itemize}
\item For any $s>0,$
$$
\|uv\|_{b^s_{p,r}}\lesssim \|u\|_{L^\infty}\|v\|_{b^s_{p,r}}+
\|v\|_{L^\infty}\|u\|_{b^s_{p,r}}.
$$
\item For any $s>0$ and $t>0,$
$$
\|uv\|_{b^{s}_{p,r}}\lesssim\|u\|_{L^\infty}\|v\|_{b^s_{p,r}}+\|v\|_{b^{-t}_{\infty,r}}\|u\|_{b^{s+t}_{p,\infty}}.
$$
\item For any $t>0$ and  $s>-n/p',$
$$
\|uv\|_{b^{s}_{p,r}}\lesssim\|u\|_{L^\infty}\|v\|_{b^s_{p,r}}
+\|u\|_{b^{n/p'}_{p',\infty}}\|v\|_{b^s_{p,r}}
+\|v\|_{b^{-t}_{\infty,r}}\|u\|_{b^{s+t}_{p,\infty}}.
$$
\item For any $q>1$ and $1-n/q\leq s\leq1,$
$$
\|uv\|_{b^0_{q,1}} \lesssim \|u\|_{b^s_{n,1}}\| v\|_{b^{1-s}_{q,1}}.
$$
\end{itemize}
\end{proposition}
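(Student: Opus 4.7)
My plan is to use Bony's homogeneous paraproduct decomposition
\begin{equation*}
uv=\dot T_u v+\dot T_v u+\dot R(u,v),
\end{equation*}
with $\dot T_u v:=\sum_j \dot S_{j-1}u\cdot\dot\Delta_j v$ and $\dot R(u,v):=\sum_{|j-k|\le 1}\dot\Delta_j u\cdot\dot\Delta_k v$, and to exploit the standard continuity properties of these two bilinear operators. Since the Fourier support of the $j$-th summand of $\dot T_u v$ lies in an annulus of scale $2^j$, while that of $\dot R(u,v)$ sits in a ball of radius $\sim 2^j$, almost-orthogonality together with Bernstein's inequality produces three building blocks: for every $s\in\R$, $\|\dot T_u v\|_{b^s_{p,r}}\lesssim\|u\|_{L^\infty}\|v\|_{b^s_{p,r}}$; for $t>0$, a shifted variant $\|\dot T_v u\|_{b^{s}_{p,r}}\lesssim \|v\|_{b^{-t}_{\infty,r}}\|u\|_{b^{s+t}_{p,\infty}}$; and whenever $\sigma_1+\sigma_2>0$ together with $1/p=1/p_1+1/p_2$ and $1/r=1/r_1+1/r_2$, the remainder satisfies $\|\dot R(u,v)\|_{b^{\sigma_1+\sigma_2}_{p,r}}\lesssim \|u\|_{b^{\sigma_1}_{p_1,r_1}}\|v\|_{b^{\sigma_2}_{p_2,r_2}}$. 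The nonhomogeneous case $b=B$ reduces to the homogeneous one once the low-frequency block $\dot S_0(uv)$ is handled separately by H\"older's inequality.

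Items (i), (ii) and (iv) then reduce to appropriate choices of indices. For (i), the hypothesis $s>0$ allows me to bound both $\dot T_uv$ and $\dot T_vu$ by the $L^\infty$ paraproduct estimate and $\dot R(u,v)$ by the remainder estimate with $(\sigma_1,\sigma_2)=(0,s)$, using the harmless embedding $L^\infty\hookrightarrow\dot B^0_{\infty,\infty}$. For (ii), $\dot T_uv$ is again controlled by the $L^\infty$ paraproduct, $\dot T_vu$ by the shifted variant, and $\dot R(u,v)$ by the remainder estimate with $(\sigma_1,\sigma_2)=(s+t,-t)$, whose sum $s$ is positive. For (iv), the remainder has total regularity $s+(1-s)=1>0$; the two paraproducts are handled by generalized Bernstein-shifted paraproduct inequalities, the condition $1-n/q\le s\le 1$ ensuring that the various integrability indices balance through the H\"older relation $1/q=1/n+(1/q-1/n)$.

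Item (iii) is the main obstacle. It is tailored to the critical range where $s$ may be negative (down to $-n/p'$), a regime in which the direct $L^\infty$ bound of $\dot R(u,v)$ available in (i) breaks down. The remedy is to exploit Bernstein's inequality, which yields $\|\dot\Delta_j u\|_{L^\infty}\le \|u\|_{\dot B^{n/p'}_{p',\infty}}$ uniformly in $j$, and then to run the remainder analysis via H\"older in $L^\infty\times L^p$: the resulting geometric factor $2^{(s+n/p')(k-j)}$ for $k\le j+N$ is summable precisely when $s+n/p'>0$, which is the origin of the threshold $s>-n/p'$ and of the extra summand $\|u\|_{b^{n/p'}_{p',\infty}}\|v\|_{b^s_{p,r}}$. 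The other two summands come respectively from the $L^\infty$ paraproduct estimate applied to $\dot T_uv$ and the shifted estimate applied to $\dot T_vu$. I expect the bulk of the technical effort to lie in this case split and in the careful verification of Bernstein embeddings involving the conjugate pair $(p,p')$.
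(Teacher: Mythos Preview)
The paper does not actually supply a proof of this proposition: it simply writes ``see the proof in e.g.\ \cite{BCD,DM-ext}'' and moves on. Your approach via Bony's paraproduct decomposition, together with the standard continuity estimates for $\dot T$ and $\dot R$, is precisely the method used in \cite{BCD}, so your proposal is correct and aligned with the cited references.
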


As obviously  a smooth compactly supported  function belongs to any space 
$\dot B^{n/p}_{p,1}(\R^n)$ with $1\leq p\leq\infty,$ and to any Besov
space $B^\sigma_{p,1}(\R^n),$  we deduce from 
the previous proposition and embedding that (see the proof in \cite{DM-ext}):
\begin{corollary}\label{c:stabprod}
Let $\theta$ be in $\cC_c^\infty(\R^n).$ Then $u\mapsto \theta\,u$ is a continuous
mapping of $b^{s}_{p,r}(\R^n)$
\begin{itemize}
\item for any $s\in\R$ and $1\leq p,r\leq\infty,$ if $b^s_{p,r}(\R^n)=B^s_{p,r}(\R^n);$
\item for any $s\in\R$ and $1\leq p,r\leq\infty$ satisfying 
\begin{equation}\label{eq:B}
-n/p'<s<n/p\quad\!\!\bigl(-n/p<s\leq n/p  \hbox{ if } r=1,\quad\!\!
 -n/p'\leq s<n/p \hbox{ if } r=\infty\bigr)
\end{equation} 
 if $b^s_{p,r}(\R^n)=\dot B^s_{p,r}(\R^n).$
\end{itemize}
\end{corollary}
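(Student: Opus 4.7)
The plan is to apply Proposition~\ref{p:product} with the fixed test function $\theta$ in the role of one of the two factors (say $u$) and the argument in the role of the other ($v$), and then absorb the resulting norms of the argument by $\|v\|_{b^s_{p,r}}$ through Besov embeddings. Before doing so, I would verify that $\theta$ itself lies in every auxiliary space appearing on the right-hand sides of the product rules. Since $\theta$ is Schwartz, $\theta\in B^\sigma_{p,r}(\R^n)$ for every choice of $\sigma,p,r$. For the homogeneous spaces, a brief analysis of the low-frequency blocks gives $\|\dot\Delta_k\theta\|_{L^p}\lesssim 2^{kn/p'}$ as $k\to-\infty$ (using that $\widehat\theta(0)=\int\theta$ is generically nonzero), so that $\theta\in\dot B^\sigma_{p,r}(\R^n)$ exactly when $\sigma>-n/p'$, with the endpoint $\sigma=-n/p'$ reached for $r=\infty$.

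In the nonhomogeneous case I would split on the sign of $s$. If $s>0$, the first rule of Proposition~\ref{p:product} applies when $s>n/p$ (via the embedding $B^s_{p,r}\hookrightarrow L^\infty$), while the second rule applies otherwise with the choice $t=n/p-s>0$, invoking the critical embedding $B^s_{p,r}\hookrightarrow B^{s-n/p}_{\infty,r}$. If $s\leq 0$, the third rule works with any $t>0$ large enough that $B^s_{p,r}\hookrightarrow B^{-t}_{\infty,r}$; here one additionally exploits the downward-in-regularity embedding $B^s_{p,r}\hookrightarrow B^{s'}_{p,r}$ for $s'\leq s$, which is specific to the nonhomogeneous setting. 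Consequently no constraint on $s$ emerges.

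For the homogeneous statement the same splitting is used, but now only the sharp embedding $\dot B^s_{p,r}\hookrightarrow\dot B^{s-n/p}_{\infty,r}$ is at our disposal: there is no downward-in-$s$ embedding to exploit. Choosing $t=n/p-s$ in the third rule therefore requires $s<n/p$, while the third rule itself requires $s>-n/p'$, which produces exactly the open range in~\eqref{eq:B}. The endpoint refinements for $r=1$ and $r=\infty$ come from sharper limiting inputs: for $s=n/p$ with $r=1$ one invokes $\dot B^{n/p}_{p,1}\hookrightarrow L^\infty$ together with the first rule, while for $s=-n/p'$ with $r=\infty$ one uses that $\theta\in\dot B^{-n/p'}_{p',\infty}$, as noted above. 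I expect the main technical nuisance to be precisely this bookkeeping at the endpoints and the verification that the boundary versions of the product rules (with $r=1$ or $r=\infty$) continue to hold in the same form; inside the open range, the corollary is an immediate consequence of Proposition~\ref{p:product} combined with a single Besov embedding.
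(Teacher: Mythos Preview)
Your approach is exactly the one the paper indicates: invoke Proposition~\ref{p:product} with $\theta$ as one factor, then absorb the extra norms of $v$ via Besov embeddings (the paper itself only sketches this and sends the reader to \cite{DM-ext} for details). So on the level of strategy there is nothing to add.

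There is, however, one genuine gap in your nonhomogeneous argument. You write that for $s\le 0$ ``the third rule works with any $t>0$ large enough\ldots\ Consequently no constraint on $s$ emerges.'' But the third rule of Proposition~\ref{p:product} carries the hypothesis $s>-n/p'$ in \emph{both} the homogeneous and nonhomogeneous settings, so it does not apply when $s\le -n/p'$. The downward embedding $B^s_{p,r}\hookrightarrow B^{s'}_{p,r}$ helps you feed the $v$-term but cannot remove this structural constraint on the rule itself. The standard repair is duality: once you have established boundedness of $v\mapsto\theta v$ on $B^{-s}_{p',r'}$ (where $-s>0$, so your positive-$s$ argument applies), the adjoint gives boundedness on $B^{s}_{p,r}$; the endpoint values of $p,r$ are then recovered by interpolation or by a direct paraproduct check.

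A smaller point: your treatment of the homogeneous endpoint $s=-n/p'$, $r=\infty$ is not quite right. Knowing that $\theta\in\dot B^{-n/p'}_{p',\infty}$ does not by itself close any of the product rules at that value of $s$. Here again duality is the clean route: boundedness on $\dot B^{n/p'}_{p',1}$ (the $r=1$ upper-endpoint case you already handled) yields boundedness on its dual $\dot B^{-n/p'}_{p,\infty}$.
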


The following  proposition  allows us to compare the spaces 
 $B^s_{p,r}(\R^n)$ and $\dot B^s_{p,r}(\R^n)$ for compactly 
 supported functions\footnote{Without any support assumption, 
 it is obvious that if  $s$ is positive then we have $\|\cdot\|_{\dot B^s_{p,r}(\R^n)}
 \lesssim\|\cdot\|_{\dot B^s_{p,r}(\R^n)},$ and the opposite inequality  holds
true if $s$ is negative.} (see the proof in \cite{DM-ext}): 
\begin{proposition}\label{p:compbesov}
Let $1\leq p,r\leq\infty$ and  
 $s>-n/p'$ (or $s\geq -n/p'$ if $r=\infty$).  
 Then for any \underline{compactly supported} distribution $f$ we have
$$
f\in B^s_{p,r}(\R^n)\iff f\in \dot B^s_{p,r}(\R^n)
$$
and there exists a constant $C=C(s,p,r,n,K)$ (with $K=\Supp f$) such that
$$
C^{-1}\|f\|_{\dot B^s_{p,r}(\R^n)}\leq \|f\|_{B^s_{p,r}(\R^n)}
\leq C\|f\|_{\dot B^s_{p,r}(\R^n)}.
$$
\end{proposition}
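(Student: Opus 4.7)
The key observation is that $\|f\|_{\dot B^s_{p,r}(\R^n)}$ and $\|f\|_{B^s_{p,r}(\R^n)}$ share exactly the same contribution from the blocks $\|\dot\Delta_k f\|_{L_p}$ with $k\geq0$. The proof therefore reduces to showing the equivalence between the nonhomogeneous low-frequency piece $\|\dot S_0 f\|_{L_p}$ and the homogeneous low-frequency tail $\bigl(\sum_{k<0} 2^{krs}\|\dot\Delta_k f\|_{L_p}^r\bigr)^{1/r}$, up to absorbing contributions from finitely many overlapping transition indices.

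The central technical ingredient is a Bernstein-type estimate adapted to compactly supported data: writing $\dot\Delta_k f=\check\varphi_k *f$ with $\check\varphi_k(x)=2^{kn}\check\varphi(2^k x)$, the identity $\|\check\varphi_k\|_{L_p(\R^n)}=2^{kn/p'}\|\check\varphi\|_{L_p}$ together with Young's inequality and $\|f\|_{L_1}\leq|K|^{1/p'}\|f\|_{L_p}$ (valid because $\Supp f\subset K$) yields, for every $k\leq0$,
$$\|\dot\Delta_k f\|_{L_p(\R^n)}\leq C_K\, 2^{kn/p'}\|f\|_{L_p(\R^n)}.$$
For a general $f\in\cE'(\R^n)$ that need not be locally integrable, the same inequality persists after replacing $\|f\|_{L_p}$ by a seminorm read on the distributional order of $f$; this is obtained directly from $\dot\Delta_kf(x)=\langle f(y),\check\varphi_k(x-y)\rangle$ using the Schwartz decay of $\check\varphi$ off $K$.

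Granted this bound, both directions of the equivalence follow from the geometric series $\sum_{k<0}2^{kr(s+n/p')}$, which converges precisely when $s+n/p'>0$ (and which is replaced by the trivially bounded $\sup$ in the endpoint case $s=-n/p'$, $r=\infty$). For $\|f\|_{B^s_{p,r}(\R^n)}\lesssim_K\|f\|_{\dot B^s_{p,r}(\R^n)}$, one observes that $\dot S_0\dot\Delta_k=0$ for $k\geq2$ by Fourier support disjointness, so $\dot S_0f=\sum_{k\leq1}\dot S_0\dot\Delta_kf$; each summand is bounded by $C\|\dot\Delta_kf\|_{L_p}$ (Young), and the negative-index tail is controlled by the Bernstein inequality above. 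The reverse inequality is obtained by inserting that same estimate directly into the low-frequency tail of the homogeneous norm, and bounding the auxiliary $\|f\|_{L_p}$ by the nonhomogeneous norm through the decomposition $f=\dot S_0 f+\sum_{k\geq0}\dot\Delta_k f$.

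The main obstacle lies in making the Bernstein bound rigorous in the purely distributional setting: when $f\in\cE'(\R^n)$ is not a function, the norm $\|f\|_{L_p}$ is not defined, and the argument has to be recast in terms of the finite order of $f\in\cE'$ paired with the Schwartz decay of $\check\varphi$ at infinity, so that the resulting constant depends only on $K=\Supp f$ and on $s,p,r,n$. A minor additional subtlety is the borderline case $s=-n/p'$ with $r=\infty$, in which one must switch from a series estimate to a supremum estimate.
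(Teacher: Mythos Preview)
The paper does not prove this proposition in the text; it is stated with the parenthetical ``(see the proof in \cite{DM-ext})'' and the argument is deferred to the companion paper. There is therefore no in-text proof to compare against. Your overall strategy---matching the high-frequency blocks and reducing everything to a comparison of $\|\dot S_0 f\|_{L_p}$ with the homogeneous low-frequency tail via the Bernstein-type bound $\|\dot\Delta_k f\|_{L_p}\leq C_K\,2^{kn/p'}\|f\|_{L_p}$---is the natural one, and the summability condition $s+n/p'>0$ emerges exactly as you say.

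There is, however, a circularity that you flag but do not actually close. Your Bernstein estimate carries $\|f\|_{L_p}$ on the right, and in both directions you then try to feed $\|f\|_{L_p}$ back into one of the two Besov norms (for $B\lesssim\dot B$ you simply invoke it; for $\dot B\lesssim B$ you write $f=\dot S_0f+\sum_{k\geq0}\dot\Delta_kf$). But for $-n/p'<s\leq0$ neither $B^s_{p,r}$ nor $\dot B^s_{p,r}$ embeds into $L_p$, and the series $\sum_{k\geq0}\|\dot\Delta_kf\|_{L_p}$ is not controlled by $\|f\|_{B^s_{p,r}}$ either. So the pivot quantity $\|f\|_{L_p}$ may well be infinite, and the chain breaks. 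Your suggested remedy---replace $\|f\|_{L_p}$ by a seminorm read on the order of $f\in{\mathcal E}'$---does give a finite bound on $\|\dot\Delta_kf\|_{L_p}$, but the resulting constant depends on that seminorm, hence on $f$, not merely on $K$; getting back to $C=C(s,p,r,n,K)$ would still require proving that this ${\mathcal E}'$-seminorm is itself dominated by $\|f\|_{B^s_{p,r}}$ with a uniform constant, which is an additional (true, but nontrivial) step you have not supplied.

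A cleaner way to break the loop is to introduce $\theta\in\cC_c^\infty$ with $\theta\equiv1$ on $K$, write $f=\theta f=\theta\dot S_0f+\sum_{j\geq0}\theta\dot\Delta_jf$, and apply your Bernstein bound to each genuinely $L_p$, compactly supported piece $\theta\dot S_0f$ and $\theta\dot\Delta_jf$ separately. The point is that $\|\theta\dot S_0f\|_{L_p}\leq C\|\dot S_0f\|_{L_p}$ and $\|\theta\dot\Delta_jf\|_{L_p}\leq C\|\dot\Delta_jf\|_{L_p}$ are finite by hypothesis, and the almost-orthogonality estimate $\|\dot\Delta_k(\theta\dot\Delta_jf)\|_{L_p}\leq C_N\,2^{-N(j-k)}\|\dot\Delta_jf\|_{L_p}$ for $j\geq k+2$ (coming from the smoothness of $\theta$) then closes the $j$-sum without ever invoking $\|f\|_{L_p}$.
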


 The following lemma will be useful for boundary estimates  (see the proof in \cite{DM-ext}): 
 \begin{lemma}\label{l:compo}
 Let $Z$ be a Lipschitz  diffeomorphism on $\R^n$ with $DZ$ and $DZ^{-1}$
 bounded,  $(p,r)\in[1,\infty]^2$
 and $s$ a real number satisfying \eqref{eq:B}.
 \begin{itemize}
 \item If in addition $s\in(-1,1)$ and $Z$ is  measure  preserving  then   the linear map
 $u\mapsto u\circ Z$ is continuous on $\dot B^s_{p,r}(\R^n).$
 \item In the general case, the map  $u\mapsto u\circ Z$ is continuous on $\dot B^s_{p,r}(\R^n)$ provided in addition
 $J_{Z^{-1}}\in \dot B^{n/p'}_{p',\infty}\cap L_\infty$ with $J_{Z^{-1}}:=|\det DZ^{-1}|.$
 \end{itemize}
  \end{lemma}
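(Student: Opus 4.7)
The plan is to derive the result by combining the difference characterization of Besov spaces with a change of variables for positive $s$, using duality to reach negative $s$, and finally extending to the non-measure-preserving case by tracking the Jacobian and invoking the product estimates of Proposition \ref{p:product}.

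I would first treat the case $s\in(0,1)$ with $Z$ measure preserving. Using the equivalent semi-norm
$$
\|u\|_{\dot B^s_{p,r}}\sim\biggl(\int_{\R^n}\|u(\cdot+h)-u\|_{L_p}^r\,|h|^{-sr-n}\,dh\biggr)^{1/r}
$$
(with the sup replacement when $r=\infty$), I would write
$$
\|(u\circ Z)(\cdot+h)-u\circ Z\|_{L_p}^p=\int_{\R^n}|u(Z(x+h))-u(Z(x))|^p\,dx
$$
and change variables $y=Z(x)$, producing $\int|u(y+\Phi_h(y))-u(y)|^p\,dy$ with $\Phi_h(y):=Z(Z^{-1}(y)+h)-y$ satisfying $|\Phi_h(y)|\sim|h|$ uniformly in $y$. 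A second change of variables $\tilde h:=\Phi_h(y)$ for fixed $y$ (with unit Jacobian by measure preservation and $|h|\sim|\tilde h|$), combined with Fubini---or Minkowski's integral inequality when $r\neq p$---then yields the continuity bound on $\dot B^s_{p,r}$.

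For $s\in(-1,0)$ and $Z$ measure preserving, I would argue by duality. Under \eqref{eq:B} the $L_2$ pairing identifies $\dot B^s_{p,r}$ with a closed subspace of the dual of $\dot B^{-s}_{p',r'}$. The $L_2$-adjoint of $u\mapsto u\circ Z$ is $v\mapsto J_{Z^{-1}}(v\circ Z^{-1})$, which reduces to $v\mapsto v\circ Z^{-1}$ under measure preservation. Since $Z^{-1}$ enjoys the same properties as $Z$, the first step applied with exponent $-s\in(0,1)$ gives continuity on $\dot B^{-s}_{p',r'}$, and duality transfers this to $\dot B^s_{p,r}$. The endpoint $s=0$ is handled by real interpolation between two symmetric cases $s=\pm\varepsilon$.

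For the second bullet I would again proceed by duality, now carrying the Jacobian through the computation. The adjoint $v\mapsto J_{Z^{-1}}(v\circ Z^{-1})$ splits as composition with $Z^{-1}$ followed by multiplication by $J_{Z^{-1}}$. The composition step is obtained by a mild adaptation of the first argument: the non-unit Jacobian now contributes an $L_\infty$ factor by assumption, only affecting constants. The multiplication step is the pointwise multiplier property of $J_{Z^{-1}}$ on $\dot B^{-s}_{p',r'}$, which is precisely the content of the third inequality in Proposition \ref{p:product} applied with $J_{Z^{-1}}\in\dot B^{n/p'}_{p',\infty}\cap L_\infty$; condition \eqref{eq:B} is exactly the range of $s$ for which that estimate is valid. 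The main obstacle, to my mind, is the careful execution of the outer change of variables together with the Fubini/Minkowski swap in the first step when $r\neq p$, and ensuring in the second step that the composition map remains bounded on the full range of $s$ permitted by \eqref{eq:B}---in particular beyond $|s|>1$---which is delicate and relies on the Jacobian regularity assumption to offset the mere Lipschitz smoothness of the diffeomorphism.
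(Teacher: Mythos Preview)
The paper does not actually prove Lemma \ref{l:compo}; it only states the result and refers the reader to \cite{DM-ext} for the argument. There is therefore no in-paper proof to compare your proposal against.

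That said, your outline follows the standard route and is largely sound: the finite-difference characterization for $s\in(0,1)$, duality for $s\in(-1,0)$, interpolation for $s=0$, and the multiplier estimate from Proposition \ref{p:product} to absorb the Jacobian in the non-volume-preserving case. The weak point you yourself flag---the double change of variables combined with a Fubini/Minkowski swap when $r\neq p$---is a genuine gap as written: Minkowski's inequality only goes one way, so you cannot swap the $h$- and $y$-integrations, change variables in $h$, and then swap back while keeping upper bounds throughout. A cleaner path for the first bullet is the real-interpolation identity $\dot B^s_{p,r}=(L_p,\dot W^1_p)_{s,r}$ for $s\in(0,1)$: composition with $Z$ is bounded on $L_p$ (change of variables, bounded Jacobian) and on $\dot W^1_p$ (chain rule, $DZ\in L_\infty$), hence on the interpolation space, with no case distinction on $r$. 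For the second bullet with $|s|$ possibly larger than $1$ under \eqref{eq:B}, your duality-plus-multiplier sketch only reaches $s\in(-1,0)$ directly; covering the full range requires either iterating the argument through higher-order differences or again exploiting interpolation between $L_p$ and higher-order Sobolev spaces, and it is precisely here that the Besov regularity of $J_{Z^{-1}}$ (not merely its boundedness) is needed.
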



\subsection{Besov spaces on domains} 

We aim at extending  the definition of  homogeneous Besov spaces 
to  general domains. We proceed  by restriction  as follows\footnote{Nonhomogeneous
 Besov spaces on domains may be defined by the same token.}:
  \begin{definition}
  For $s\in\R$ and $1\leq p,q\leq\infty,$ we  
  define the homogeneous Besov space $\dot B^s_{p,q}(\Omega)$
over $\Omega$ as the restriction (in the distributional sense) of 
$\dot B^s_{p,q}(\R^{n})$ on $\Omega,$ that is
$$
\phi\in\dot B^s_{p,q}(\Omega) \iff
\phi=\psi_{|\Omega}\quad\hbox{for some}\quad
\psi\in \dot B^s_{p,q}(\R^{n}).
$$
We then set
$$
\|\phi\|_{\dot B^s_{p,q}(\Omega)}:=
\inf_{\psi_{|\Omega}=\phi}\|\psi\|_{\dot B^s_{p,q}(\R^{n})}.
$$
\end{definition}

The embedding, duality and interpolation properties of these Besov spaces
may be deduced from those  on $\R^n.$ As regards duality, we shall use repeatedly 
the following result:
\begin{proposition}\label{p:duality} 
If $-1+1/p<s<1/p$  (with $1\leq p,r<\infty$) then  
  the space  $\dot B^{-s}_{p',r'}(\Omega)$ may be identified with the dual space of 
 $\dot B^s_{p,r}(\Omega)$;  in the limit case $r=\infty$ then   $\dot B^{-s}_{p',1}(\Omega)$ 
 may be identified with the dual space of the completion  
 of $\cC_c^\infty(\R^n)$ for $\|\cdot\|_{\dot B^s_{p,\infty}(\Omega)}.$
 Furthermore, without any condition over $(s,p,r),$ we have
 $$
\biggl| \int_{\Omega} uv\,dx\biggr|\leq C\|u\|_{\dot B^s_{p,r}(\Omega)}
\|v\|_{\dot B^{-s}_{p',r'}(\Omega)}.
  $$
\end{proposition}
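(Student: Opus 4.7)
The overall strategy is to reduce everything to the classical whole-space duality $(\dot B^s_{p,r}(\R^n))^{*}=\dot B^{-s}_{p',r'}(\R^n)$ by exploiting the very definition of $\dot B^s_{p,r}(\Omega)$ as a quotient. The restriction map $R:\dot B^s_{p,r}(\R^n)\to\dot B^s_{p,r}(\Omega)$ is, by construction, a surjective quotient map with kernel
$$
N:=\{\psi\in\dot B^s_{p,r}(\R^n)\,:\,\psi_{|\Omega}=0\},
$$
so the abstract identification $(E/N)^{*}\simeq N^{\perp}$ gives
$(\dot B^s_{p,r}(\Omega))^{*}\simeq N^{\perp}\subseteq\dot B^{-s}_{p',r'}(\R^n)$. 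Since an element of $N$ is, as a distribution on $\R^n$, supported in $\R^n\setminus\Omega$, the annihilator $N^\perp$ coincides with the closed subspace of $\dot B^{-s}_{p',r'}(\R^n)$ consisting of distributions supported in $\overline\Omega$.

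\textbf{Core identification step.} The remaining task is to show that the extension-by-zero operator induces an isomorphism between $\dot B^{-s}_{p',r'}(\Omega)$ and the subspace of $\dot B^{-s}_{p',r'}(\R^n)$ whose elements are supported in $\overline\Omega$. This is precisely where the narrow range $-1+1/p<s<1/p$ comes in: reading it on the dual side as $-1+1/p'<-s<1/p'$, it excludes the critical exponents $\pm 1/p'$ at which surface measures on $\partial\Omega$ sit in the Besov scale, so that no distribution of $\dot B^{-s}_{p',r'}(\R^n)$ supported in $\overline\Omega$ can carry mass on $\partial\Omega$. Concretely, I would take $\tilde v\in\dot B^{-s}_{p',r'}(\R^n)$ supported in $\overline\Omega$, introduce a smooth cut-off $\eta_\varepsilon$ equal to $1$ on an inner tubular neighbourhood $\Omega_\varepsilon\Subset\Omega$, and use the product estimates of Proposition~\ref{p:product} (applied with the regularity exponent $-s$ in the admissible range) together with Proposition~\ref{p:compbesov} for the locally supported pieces, to prove $\eta_\varepsilon\tilde v\to\tilde v$ in $\dot B^{-s}_{p',r'}(\R^n)$ as $\varepsilon\to0$, with quantitative bounds matching the $\dot B^{-s}_{p',r'}(\Omega)$ norm of $\tilde v_{|\Omega}$. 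I expect this to be the main obstacle, since it forces a delicate analysis of how Besov norms interact with the boundary and explains the bilateral bound on $s$.

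\textbf{Completion case and final bilinear bound.} When $r=\infty$, $\cC_c^\infty(\R^n)$ is only weak-$\!*$ dense in $\dot B^s_{p,\infty}(\R^n)$, so one replaces $\dot B^s_{p,\infty}(\Omega)$ by the completion of $\cC_c^\infty(\R^n)$ for the corresponding quotient semi-norm; since a Banach space and its completion have the same dual, the argument above carries over verbatim and yields the dual space $\dot B^{-s}_{p',1}(\Omega)$ via the usual weak-$\!*$ duality on $\R^n$. Finally, the pointwise estimate $|\int_\Omega uv\,dx|\leq C\|u\|_{\dot B^s_{p,r}(\Omega)}\|v\|_{\dot B^{-s}_{p',r'}(\Omega)}$ is a direct consequence: given extensions $\tilde u\in\dot B^s_{p,r}(\R^n)$ of $u$ and $\tilde v\in\dot B^{-s}_{p',r'}(\R^n)$ of $v$ chosen so that the whole-space pairing $\langle\tilde u,\tilde v\rangle_{\R^n}$ reduces to $\int_\Omega uv\,dx$ (for instance by taking one of them to be the zero-extension of a smooth representative), the whole-space bilinear bound $|\langle\tilde u,\tilde v\rangle_{\R^n}|\leq C\|\tilde u\|_{\dot B^s_{p,r}(\R^n)}\|\tilde v\|_{\dot B^{-s}_{p',r'}(\R^n)}$ gives the claim after passing to the infimum over the remaining extension.
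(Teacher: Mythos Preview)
The paper states Proposition~\ref{p:duality} without proof; the sentence preceding it merely says that ``the embedding, duality and interpolation properties of these Besov spaces may be deduced from those on $\R^n$,'' and the proposition is then used as a black box (notably in \eqref{lv15} and \eqref{lv9}). So there is no in-paper argument to compare against, and your outline has to be judged on its own merits.

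Your quotient-space strategy for the main duality assertion is the standard one and is sound: identifying $\dot B^s_{p,r}(\Omega)$ with $\dot B^s_{p,r}(\R^n)/N$, invoking $(\dot B^s_{p,r}(\R^n))^{*}=\dot B^{-s}_{p',r'}(\R^n)$ for $p,r<\infty$, and reading the dual as the annihilator $N^{\perp}$ (distributions supported in $\overline\Omega$) is correct. The step you flag as the main obstacle --- showing that $N^{\perp}$ coincides, with equivalent norms, with $\dot B^{-s}_{p',r'}(\Omega)$ under extension by zero --- is indeed where the constraint $-1+1/p<s<1/p$ enters. Rather than the cut-off argument you sketch, the cleanest route in the paper's framework is to appeal directly to Proposition~\ref{p:density} (and Remark~\ref{r:besov}): density of $\cC_c^\infty(\Omega)$ in $\dot B^{-s}_{p',r'}(\Omega)$ for $-1+1/p'<-s<1/p'$ gives both that extension by zero is a bounded right inverse to restriction and that no mass can concentrate on $\partial\Omega$.

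There is, however, a genuine gap in your last paragraph. For the bilinear inequality claimed \emph{without any condition on $(s,p,r)$}, your argument hinges on replacing one factor by the zero-extension of a smooth representative so that the $\R^n$-pairing equals $\int_\Omega uv\,dx$, and then taking the infimum over extensions of the other factor. But the zero-extension is only controlled in the domain Besov norm when the exponent lies in the window $-1+1/p<s<1/p$ (this is precisely the content of Proposition~\ref{p:density}); outside that range your device is unavailable, and for a generic pair of extensions $\tilde u,\tilde v$ the quantity $\langle\tilde u,\tilde v\rangle_{\R^n}$ differs from $\int_\Omega uv\,dx$ by an uncontrolled contribution from $\R^n\setminus\Omega$. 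So as written, your argument does not cover the unconditional case. One way to close this is to fix a universal (Seeley/Rychkov-type) extension operator $E$ bounded on the whole Besov scale and use the adjoint $E^{*}$ to realize the pairing; alternatively, one interprets the inequality as an a priori bound for $u,v\in\cC_c^\infty(\Omega)$, in which case zero-extension of \emph{both} factors is harmless and one then has to argue that the resulting bilinear form extends by continuity --- but that again needs a density statement you have not supplied outside the restricted range.
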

Similarly, some product laws for  Besov spaces on $\R^n$ may be extended to the domain case. 
We shall use the last inequality of Proposition \ref{p:product}
and also the following result that is proved in \cite{DM-ext}: 
\begin{proposition}\label{p:productdomain} Let $b^s_{p,r}(\Omega)$ denote $\dot B^s_{p,r}(\Omega)$
or $B^s_{p,r}(\Omega),$ and $\Omega$ be any domain of $\R^n.$
Then for any $p\in[1,\infty],$   $s$ such that  $-n/p'<s<n/p$  (or $-n/p'<s\leq n/p$ if $r=1,$
or $-n/p'\leq s<n/p$ if $r=\infty$),
 the following
inequality holds true:
$$
\|uv\|_{b^s_{p,r}(\Omega)}\leq C\|u\|_{b^{n/q}_{q,1}(\Omega)}\|v\|_{b^s_{p,r}(\Omega)}\quad\hbox{with }
\  q=\min(p,p').$$
\end{proposition}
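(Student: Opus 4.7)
I would reduce the inequality to its counterpart on $\R^n$ by exploiting the defining extension property of $b^s_{p,r}(\Omega)$, and then prove the whole-space product law by Bony's paraproduct decomposition.

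\textbf{Step 1: Reduction to $\R^n$.} Fix $\ep>0$. By definition of the restriction norm, pick $U\in b^{n/q}_{q,1}(\R^n)$ and $V\in b^s_{p,r}(\R^n)$ with $U_{|\Omega}=u$, $V_{|\Omega}=v$, and
$$
\|U\|_{b^{n/q}_{q,1}(\R^n)}\leq(1+\ep)\|u\|_{b^{n/q}_{q,1}(\Omega)},\qquad
\|V\|_{b^s_{p,r}(\R^n)}\leq(1+\ep)\|v\|_{b^s_{p,r}(\Omega)}.
$$
Since the critical embedding $\dot B^{n/q}_{q,1}(\R^n)\hookrightarrow L^\infty(\R^n)$ holds (and likewise for the nonhomogeneous version), the pointwise product $UV$ is a well-defined tempered distribution whose restriction to $\Omega$ coincides with $uv$. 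Hence $\|uv\|_{b^s_{p,r}(\Omega)}\leq\|UV\|_{b^s_{p,r}(\R^n)}$, and letting $\ep\to0$ reduces the proof to establishing
$$
\|UV\|_{b^s_{p,r}(\R^n)}\lesssim \|U\|_{b^{n/q}_{q,1}(\R^n)}\,\|V\|_{b^s_{p,r}(\R^n)}.
$$

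\textbf{Step 2: Paraproduct on $\R^n$.} Use Bony's decomposition $UV=T_UV+T_VU+R(U,V)$, where $T_UV=\sum_k \dot S_{k-1}U\,\ddj[k]V$ and $R(U,V)=\sum_{|k-k'|\leq1}\ddj[k]U\,\ddj[k']V$. The first paraproduct is immediate: by the critical embedding,
$$
\|T_UV\|_{b^s_{p,r}}\lesssim \|U\|_{L^\infty}\|V\|_{b^s_{p,r}}\lesssim \|U\|_{b^{n/q}_{q,1}}\|V\|_{b^s_{p,r}},
$$
with no restriction on $s$. For $T_VU$ and $R(U,V)$ I would apply H\"older to each dyadic block with exponents adapted to the choice $q=\min(p,p')$, and use Bernstein's inequality to pay the factor $2^{kn/q}$ needed to convert $\|\ddj[k] U\|_{L^q}$ into the Lebesgue norm that matches the H\"older partner. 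Summation against $\ell^r$ with weight $2^{sj}$ then yields the desired bound, provided the geometric sums converge. The upper constraint $s<n/p$ (with equality admissible when $r=1$) is what makes the remainder $R(U,V)$ summable, whereas $s>-n/p'$ (with equality when $r=\infty$) controls the low-frequency accumulation in $T_VU$.

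\textbf{Main obstacle.} The delicate point is the endpoint bookkeeping: at $s=n/p$ with $r=1$ and at $s=-n/p'$ with $r=\infty$ one has no room for a geometric decay factor and the summation in the remainder (resp.\ in $T_VU$) must be handled via sharp $\ell^1$-$\ell^\infty$ duality rather than by a crude geometric series. A related technical subtlety is to verify that $UV$ genuinely belongs to $\cS'_h(\R^n)$ whenever $U\in\dot B^{n/q}_{q,1}$ and $V\in\dot B^s_{p,r}$ in the stated range, so that its Littlewood--Paley decomposition actually reconstructs $UV$ and the paraproduct identity holds in $\cS'_h$; this relies on the fact that $U\in L^\infty$ together with the range of $s$ that rules out pathological low-frequency behavior of $V$.
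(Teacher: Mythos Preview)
The paper does not actually prove this proposition here; it merely cites the result from \cite{DM-ext}. So there is no proof in the paper to compare against.

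That said, your approach is the natural and correct one. The reduction in Step~1 is immediate from the restriction definition of $b^s_{p,r}(\Omega)$, and the key point you note---that $U\in b^{n/q}_{q,1}(\R^n)\hookrightarrow L^\infty(\R^n)$ makes the pointwise product $UV$ a well-defined tempered distribution restricting to $uv$---is exactly what makes this reduction work. Step~2 is the standard paraproduct argument; the bounds you sketch on $T_UV$, $T_VU$, and $R(U,V)$ under the stated range of $s$ (with the endpoint cases handled by $\ell^1$--$\ell^\infty$ duality rather than a geometric series) are classical and appear, for instance, in Chapter~2 of \cite{BCD}. Your identification of the two technical subtleties---the endpoint summation and the verification that $UV\in\cS'_h(\R^n)$ in the homogeneous case---is accurate; neither is a genuine obstruction, only a point requiring care. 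This is almost certainly the route taken in \cite{DM-ext} as well.
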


A  very useful feature of Besov spaces is their interpolation properties.
 We refer to the books \cite{Be, Triebel} for the proof of the following statement.
\begin{proposition}\label{p:interpolation}
Let $b^s_{p,q}$ denote $B^s_{p,q}$ or $\dot B^s_{p,q}$; $s\in \R$, $p\in [1,\infty)$ and $q\in [1,\infty]$. The real interpolation of 
Besov spaces gives the following statement if $s_1\not=s_2$:
$$
\bigl( b^{s_1}_{p,q_1}(\Omega), b^{s_2}_{p,q_2}(\Omega)\bigr)_{\theta,q}= b^{s}_{p,q}(\Omega)
$$
with $s=\theta s_2+(1-\theta)s_1$, and $\frac 1p=\frac{\theta}{p_2}+\frac{1-\theta}{p_1}\cdotp$

Moreover, if $s_1\not=s_2,$ $t_1\not=t_2$ and if  $T: b^{s_1}_{p_1,q_1}(\Omega)+b^{s_2}_{p_2,q_2}(\Omega) \to b^{t_1}_{k_1,l_1}(\Omega) 
+ b^{t_2}_{k_2,l_2}(\Omega)$ is a linear map,  bounded from $b^{s_1}_{p_1,q_1}(\Omega)$ to $b^{t_1}_{k_1,l_1}(\Omega)$ and from 
$b^{s_2}_{p_2,q_2}(\Omega)$ 
to $b^{t_2}_{k_2,l_2}(\Omega)$ then
for any $\theta\in(0,1),$  the map $T$ is also bounded from
$b^s_{p,q}(\Omega)$ to $b^t_{k,q}(\Omega)$ with 
$$
s=\theta s_2+(1-\theta)s_1,\quad
t=\theta t_2+(1-\theta)t_1,\quad
\frac1p=\frac\theta{p_2}+\frac{1-\theta}{p_1},\quad
\frac1k=\frac\theta{k_2}+\frac{1-\theta}{k_1}\cdotp
$$
\end{proposition}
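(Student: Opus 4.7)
The plan is to reduce both claims to the corresponding well-known interpolation identities on $\R^n$, which are recorded in the monographs of Bergh--L\"ofstr\"om and Triebel cited in the statement. The key structural observation is that, by construction, $\dot B^s_{p,q}(\Omega)$ is the quotient of $\dot B^s_{p,q}(\R^n)$ under the restriction map $R_\Omega$ equipped with the infimum norm. Hence the natural strategy is to exhibit a single extension operator $E_\Omega$ that is simultaneously bounded from $\dot B^{s_i}_{p,q_i}(\Omega)$ into $\dot B^{s_i}_{p,q_i}(\R^n)$ for $i=1,2$ and satisfies $R_\Omega\circ E_\Omega=\mathrm{Id}$ on $\Omega$, so that $(E_\Omega,R_\Omega)$ forms a coretraction--retraction pair in the sense of Bergh--L\"ofstr\"om (Section~6.4).

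For the interpolation identity itself I would argue in two directions. The easy inclusion
$$\bigl(\dot B^{s_1}_{p,q_1}(\Omega),\dot B^{s_2}_{p,q_2}(\Omega)\bigr)_{\theta,q} \hookleftarrow \dot B^{s}_{p,q}(\Omega)$$
follows because $R_\Omega$ is an exact contraction between the pairs on $\R^n$ and on $\Omega$; applied to the classical identity
$$(\dot B^{s_1}_{p,q_1}(\R^n),\dot B^{s_2}_{p,q_2}(\R^n))_{\theta,q}=\dot B^{s}_{p,q}(\R^n),$$
the functoriality of $(\cdot,\cdot)_{\theta,q}$ yields boundedness of the identity from the right-hand side into the interpolation space on $\Omega$. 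For the reverse inclusion, I would start from $\phi\in(\dot B^{s_1}_{p,q_1}(\Omega),\dot B^{s_2}_{p,q_2}(\Omega))_{\theta,q}$, apply $E_\Omega$ to land in the $\R^n$ interpolation space (which equals $\dot B^{s}_{p,q}(\R^n)$ by the cited theorem), and then restrict back via $R_\Omega$ to recover $\phi$ in $\dot B^{s}_{p,q}(\Omega)$, with the appropriate norm control.

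The second assertion, concerning interpolation of linear operators, is then a direct application of the real interpolation theorem to the Banach couple $(b^{s_1}_{p_1,q_1}(\Omega),b^{s_2}_{p_2,q_2}(\Omega))$ once the intermediate spaces have been identified with Besov spaces on $\Omega$ by the first part. No further geometric input is required here beyond the Banach-couple structure.

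The main obstacle is the construction of the universal extension operator $E_\Omega$, which must be bounded simultaneously for \emph{both} endpoints of the interpolation (hence for a continuum of smoothness indices through the reiteration theorem), while remaining consistent with the quotient-by-polynomials structure encoded by $\mathcal{S}'_h(\R^n)$. For the three geometries relevant to the paper---the half-space $\R^n_+$, bounded smooth domains, and exterior domains---this is provided respectively by Stein's higher-order reflection, by Rychkov's universal extension, or by a straightforward gluing of the two; one checks that these operators map $\dot B^s_{p,q}(\Omega)$ continuously into $\dot B^s_{p,q}(\R^n)$ in the range \eqref{eq:B} (which is precisely the regime in which the restriction is well behaved on homogeneous spaces). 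Once this is in place, the retract--coretract formalism closes the argument with no further computation.
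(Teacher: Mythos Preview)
The paper does not actually prove this proposition; it simply refers the reader to the monographs of Bennett--Sharpley and Triebel. Your retraction/coretraction argument via a universal extension operator $E_\Omega$ with $R_\Omega\circ E_\Omega=\mathrm{Id}$ is precisely the standard mechanism by which such interpolation identities are transferred from $\R^n$ to domains in those references, so your outline is correct and in line with the cited sources.
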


The  following
composition estimate  will be of constant use in the last section of this paper.
\begin{proposition}\label{p:compo} Let $f:\R^r\rightarrow\R$ be a $C^1$ function such that
$f(\vec 0)=0$ and, for some $m\geq1$ and $K\geq0,$
\begin{equation}\label{eq:growth}
|df(\vec u)|\leq K|\vec u|^{m-1}\quad\hbox{for all }\ \vec u\in\R^r.
\end{equation}
Then for all $s\in(0,1)$ and $1\leq p,q\leq\infty$ there exists a constant $C$ so that
\begin{equation}\label{eq:compo1}
\|f(\vec u)\|_{\dot B^s_{p,q}(\Omega)}\leq CK\|\vec u\|_{L_\infty(\Omega)}^{m-1}\|u\|_{\dot B^s_{p,q}(\Omega)}. 
\end{equation}
\end{proposition}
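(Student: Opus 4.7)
\noindent\emph{Proof sketch.} My plan is to first establish the inequality in the whole space $\R^n,$ and then transfer it to a general domain $\Omega$ by a combination of extension and truncation.

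\medbreak
For the whole-space case, I would exploit the classical finite-differences characterization of $\dot B^s_{p,q}(\R^n)$ that is available precisely because $s\in(0,1)$: for all $1\leq p,q\leq\infty$,
$$
\|g\|_{\dot B^s_{p,q}(\R^n)}\sim \biggl(\int_{\R^n}\bigl(|h|^{-s}\|g(\cdot+h)-g(\cdot)\|_{L_p(\R^n)}\bigr)^q\,\frac{dh}{|h|^n}\biggr)^{1/q},
$$
with the usual modification for $q=\infty.$ Because $f(\vec 0)=0,$ writing $f(\vec u(x+h))-f(\vec u(x))=\int_0^1 df\bigl(\vec u(x)+t(\vec u(x+h)-\vec u(x))\bigr)\,dt\cdot(\vec u(x+h)-\vec u(x))$ and using \eqref{eq:growth} yields at every $x\in\R^n$ and every $h\in\R^n,$
$$
|f(\vec u(x+h))-f(\vec u(x))|\leq K\,\|\vec u\|_{L_\infty(\R^n)}^{m-1}\,|\vec u(x+h)-\vec u(x)|.
$$
Taking $L_p$ norm in $x,$ dividing by $|h|^s,$ and applying the $L_q(\R^n,dh/|h|^n)$ norm in $h$ produces \eqref{eq:compo1} with $\Omega=\R^n.$

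\medbreak
For a general domain $\Omega,$ I would argue from the restriction definition of $\dot B^s_{p,q}(\Omega).$ Given $\vec u\in\dot B^s_{p,q}(\Omega)\cap L_\infty(\Omega),$ I pick an extension $\vec\psi\in\dot B^s_{p,q}(\R^n)$ with $\|\vec\psi\|_{\dot B^s_{p,q}(\R^n)}\leq 2\|\vec u\|_{\dot B^s_{p,q}(\Omega)}.$ A priori, $\vec\psi$ need not be bounded on $\R^n$ by $\|\vec u\|_{L_\infty(\Omega)},$ which would spoil the constant; to remedy this I compose $\vec\psi$ with a smooth truncation $T:\R^r\to\R^r$ that is the identity on the ball of radius $\|\vec u\|_{L_\infty(\Omega)},$ satisfies $|T(\vec v)|\leq 2\|\vec u\|_{L_\infty(\Omega)},$ and has $\|dT\|_{L_\infty}$ bounded by an absolute constant. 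Exactly the same finite-differences argument applied to the Lipschitz map $T$ gives $\|T(\vec\psi)\|_{\dot B^s_{p,q}(\R^n)}\lesssim\|\vec\psi\|_{\dot B^s_{p,q}(\R^n)}.$ Since $T(\vec\psi)=\vec u$ on $\Omega,$ the function $f(T(\vec\psi))$ is a valid extension of $f(\vec u)$ to $\R^n,$ so applying the $\R^n$ estimate already proved to $f(T(\vec\psi))$ and taking the infimum over extensions yields \eqref{eq:compo1}.

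\medbreak
The main obstacle is precisely the domain step: if one naively extends $\vec u,$ the $L_\infty$ norm of the extension is not controlled by $\|\vec u\|_{L_\infty(\Omega)},$ so the sharp factor $\|\vec u\|_{L_\infty(\Omega)}^{m-1}$ would be lost. The truncation trick above, combined with the Lipschitz stability of the $\dot B^s_{p,q}$ norm under composition that $s\in(0,1)$ guarantees, is exactly what bridges this gap. All remaining ingredients reduce to the finite-differences formula and the pointwise mean-value inequality.
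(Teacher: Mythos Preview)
Your argument is correct, and the core mechanism---the mean-value formula combined with the finite-difference characterization valid for $s\in(0,1)$---is exactly what the paper uses. The route to the domain case differs, however. The paper invokes the intrinsic Slobodeckii-type characterization of $\dot B^s_{p,q}(\Omega)$ directly, namely
\[
\|g\|_{\dot B^s_{p,q}(\Omega)}=\biggl(\int_{\Omega}\biggl(\int_{\Omega}\frac{|g(y)-g(x)|^p}{|y-x|^{n+sp}}\,dy\biggr)^{q/p}dx\biggr)^{1/q},
\]
and applies the pointwise bound $|f(\vec u(y))-f(\vec u(x))|\leq K\|\vec u\|_{L_\infty(\Omega)}^{m-1}|\vec u(y)-\vec u(x)|$ inside this double integral; no extension is needed. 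Your approach instead works on $\R^n$ and transfers to $\Omega$ via the restriction definition, with the truncation trick to keep the $L_\infty$ factor equal to $\|\vec u\|_{L_\infty(\Omega)}$ rather than that of an uncontrolled extension. The paper's path is shorter but tacitly relies on the equivalence of the intrinsic seminorm with the restriction norm (which needs some mild regularity of $\partial\Omega$); your path is longer but uses only the standard $\R^n$ characterization and the restriction definition, so it is in a sense more self-contained. Both yield the same estimate with an absolute constant.
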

\begin{proof}
The proof relies on the characterization of the norm of $\dot B^s_{p,q}(\Omega)$ 
by finite differences, namely\footnote{Here we just consider the case  $q<\infty$ to shorten the presentation.}
\begin{equation}\label{eq:fd}
\|f(\vec u)\|_{\dot B^s_{p,q}(\Omega)}=\biggl(\int_{\Omega}\biggl(\int_{\Omega}
\frac{|f(\vec u(y))-f(\vec u(x))|^p}{|y-x|^{n+sp}}\,dy\biggr)^{\frac qp}\,dx\biggr)^{\frac1q}.
\end{equation}
Now the mean value formula implies that
$$
f(\vec u(y))-f(\vec u(x))=\biggl(\int_0^1df(\vec u(x)+t(\vec u(y)-\vec u(x)))\,dt\biggr)\cdot(\vec u(y)-\vec u(x)).
$$
Hence using  the growth assumption \eqref{eq:growth},
\begin{equation}\label{eq:fd1}
|f(\vec u(y))-f(\vec u(x))|\leq K\biggl(\int_0^1|\vec u(x)+t(\vec u(y)-\vec u(x))|^{m-1}\,dt\biggr)|\vec u(y)-\vec u(x)|.
\end{equation}
Therefore  we get
$$|f(\vec u(y))-f(\vec u(x))|\leq K\|\vec u\|_{L_\infty(\Omega)}^{m-1}|\vec u(y)-\vec u(x)|.$$
Inserting this latter inequality in \eqref{eq:fd}, we readily get \eqref{eq:compo1}.
\end{proof}

In \cite{DM-jfa,DM-ext}, we proved that:
\begin{proposition}\label{p:density}
Let $\Omega$ be the half-space, or a bounded or exterior domain with
$C^1$ boundary. 
 For all $1\leq p,q <\infty$, 
and $-1+1/p<s<1/p,$ we have
\begin{equation}
 B^s_{p,q}(\Omega)=\overline{\cC_c^\infty(\Omega)}^{\|\cdot\|_{B^s_{p,q}(\Omega)} }.
\end{equation}
\end{proposition}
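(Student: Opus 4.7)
The plan is to first establish the result on the half-space $\R^n_+$ by a \emph{translate-and-mollify} argument based on the zero-extension operator, and then to reduce the bounded and exterior cases to it via local $C^1$ charts and a partition of unity.

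For the half-space step, given $f\in B^s_{p,q}(\R^n_+)$, I would first construct a bounded zero-extension $\tilde{f}:=E_0 f\in B^s_{p,q}(\R^n)$ with $\Supp \tilde{f}\subset\overline{\R^n_+}$. In the subcritical range $0\le s<1/p$ this is the classical zero extension (no trace to worry about); in the negative range $-1+1/p<s<0$ it is obtained by dualizing the trivial restriction operator $B^{-s}_{p',q'}(\R^n)\to B^{-s}_{p',q'}(\R^n_+)$ through Proposition \ref{p:duality}. I would then set $\tilde{f}_\delta(x):=\tilde{f}(x-2\delta e_n)$, supported in $\{x_n\ge 2\delta\}$, and mollify: $g_\delta:=\rho_\delta*\tilde{f}_\delta$ is smooth and supported in $\{x_n\ge\delta\}\subset\R^n_+$. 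Strong continuity of translation and of mollification in $B^s_{p,q}(\R^n)$ (valid since $p,q<\infty$) yields $g_\delta\to\tilde{f}$ in that space; multiplying by a cutoff $\chi_R\in\cC_c^\infty(\R^n)$ equal to $1$ on $B(0,R)$ — whose action on $B^s_{p,q}$ is controlled by Corollary \ref{c:stabprod} — produces a $\cC_c^\infty(\R^n_+)$ approximation of $f$ upon sending $R\to\infty$ and then $\delta\to 0$.

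For a bounded or exterior domain with $C^1$ boundary, I would cover $\partial\Omega$ by finitely many open sets $U_1,\ldots,U_N$, each equipped with a $C^1$ diffeomorphism $Z_i$ straightening $\partial\Omega$, complete the cover with an interior patch $U_0\Subset\Omega$ (and, in the exterior case, a far-field patch outside a large ball), and select a subordinated partition of unity $(\theta_i)$. Decomposing $f=\sum_i\theta_i f$, the interior and far-field pieces are approximated by standard mollifiers supported in $\Omega$. Each boundary piece $\theta_i f$ is compactly supported in $U_i\cap\Omega$ and, after transport by $Z_i$, becomes a compactly supported element of $B^s_{p,q}(\R^n_+)$: the change of variables is bounded thanks to Lemma \ref{l:compo} — applicable because $p\ge 1$ and $-1+1/p<s<1/p$ force $s\in(-1,1)$ — combined with Proposition \ref{p:compbesov} to pass between homogeneous and nonhomogeneous norms for compactly supported distributions. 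The half-space step then provides $\cC_c^\infty(\R^n_+)$ approximants, which pulled back through $Z_i^{-1}$ yield $\cC_c^\infty(\Omega)$ approximants of $\theta_i f$; summing over $i$ closes the argument.

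The main obstacle is the construction of the bounded zero-extension in the negative range $-1+1/p<s<0$, where elements of $B^s_{p,q}(\R^n_+)$ are genuine distributions, so that ``extension by zero'' must be re-interpreted through the duality pairing supplied by Proposition \ref{p:duality}; verifying that this extension indeed lands in $B^s_{p,q}(\R^n)$ is the one computation I would not want to finesse. A secondary technical point is the mere $C^1$ regularity of $\partial\Omega$, but Lemma \ref{l:compo} is already stated for Lipschitz diffeomorphisms, and the Jacobian $J_{Z_i^{-1}}$ is continuous, hence bounded on the compact support of the cutoff $\theta_i$, so the hypotheses of the lemma are met.
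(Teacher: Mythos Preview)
The paper does not prove this proposition; it merely records it as having been established in \cite{DM-jfa,DM-ext}. Your translate-and-mollify argument on $\R^n_+$, followed by localization through $C^1$ charts and a partition of unity, is the standard route and is essentially what one expects to find in those references; the outline is sound.

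Two points of friction, both repairable. First, your duality construction of the zero extension in the negative range is the right idea, but when $q=1$ the conjugate index is $q'=\infty$, and Proposition~\ref{p:duality} in that endpoint only identifies $\dot B^{-s}_{p',1}$ with the dual of a \emph{closure} of test functions rather than of the full space; an extra density step (or a direct proof of the zero-extension bound via the finite-difference characterization, which treats all $-1+1/p<s<1/p$ uniformly) is needed there. Second, your justification for invoking Lemma~\ref{l:compo} is incomplete: continuity of $J_{Z_i^{-1}}$ yields local $L_\infty$ control, but the non--measure-preserving clause of that lemma also requires $J_{Z^{-1}}\in\dot B^{n/p'}_{p',\infty}$, which a merely $C^0$ Jacobian need not satisfy when $n/p'\geq1$. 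The painless fix is to take the boundary-flattening map of graph type, $Z(x',x_n)=(x',x_n-\psi(x'))$ with $\psi\in C^1$ describing $\partial\Omega$ locally; this has Jacobian identically $1$, so the first (measure-preserving) clause of Lemma~\ref{l:compo} applies, and your range $-1+1/p<s<1/p\subset(-1,1)$ is exactly what that clause covers.
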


\begin{remark}
In any $C^1$ domain $\Omega$ and for $0<s<n/p$ 
the  space  $\dot B^s_{p,q}(\Omega)$ embeds in $\dot B^0_{m,q}(\Omega)$ with $1/m=1/p-s/n.$
Therefore, if $q\leq\min(2,m),$ it also embeds in the Lebesgue space $L_m(\Omega).$
So finally if $s\in (0,\frac 1p)$ and $q\leq\min(2,m)$ with $m$ as above
then Proposition \ref{p:density} allows us to redefine the space $\dot B^s_{p,q}(\Omega)$ by
\begin{equation}
 \dot B^s_{p,q}(\Omega)=\overline{\cC_c^\infty(\Omega)}^{\|\cdot\|_{\dot B^s_{p,q}(\Omega)}}.
\end{equation}
\end{remark}

\begin{remark}\label{r:besov} 
In particular under the above hypotheses,  both classes  of Besov spaces
admit trivial extension by zero onto the whole space.
Combining with Proposition \ref{p:compbesov}, we deduce that
$$
 B^s_{p,q}(\Omega)=\dot  B^s_{p,q}(\Omega)\quad\hbox{if }\ 
 -1+1/p<s<1/p\quad\hbox{and}\quad\Omega \ \hbox{ is }\ bounded.
$$
Note also that, for obvious reasons, the above density result does not hold true if $q=\infty,$
for the strong topology. However, it holds for the weak $*$ topology.
\end{remark}


\section{A priori estimates for the heat equation}\label{s:maximal}

 This section is the core of the paper. Here we prove generalizations of Theorem \ref{th:whole}
 to more general domains. 
 First we consider the half-space case, then 
 we consider the exterior and bounded cases. 
We shall mainly focus on the unbounded case which is more tricky 
and just indicate at the end of this section what has to be changed
in the bounded domain case.

\subsection{The heat equation  in the half-space}

The purpose of this paragraph is to extend  Theorem \ref{th:whole}
to the half-space case $\R^n_+,$ namely
\begin{equation}\label{h1}
 \begin{array}{lcl}
 u_t-\nu \Delta u =f & \mbox{in } & (0,T)\times\R^n_+,\\[6pt]
u|_{x_n=0}=0 & \mbox{on } & (0,T)\times \d\R^{n}_+,\\[6pt]
u|_{t=0} = u_0 &  \mbox{on } & \R^n_+.
\end{array}
\end{equation}

\begin{theorem}\label{th:half}
Let $p\in[1,\infty)$ and  $s\in(-1+1/p,1/p).$
Assume that  $f$ belongs to $L_1(0,T;\dot B^s_{p,1}(\R^n_+))$
and that $u_0$ is  in $\dot B^s_{p,1}(\R^n_+).$
Then \eqref{h1} has a unique solution $u$ satisfying  
$$
u\in\cC([0,T);\dot B^s_{p,1}(\R^n_+)),\quad u_t,\nabla^2u\in L_1(0,T;\dot B^s_{p,1}(\R^n_+))
$$
and  the following estimate is valid:
\begin{multline}\label{h3}
\|u\|_{L_\infty(0,T;\dot B^s_{p,1}(\R^n_+))}+
  \|u_t,\nu\nabla^2 u\|_{ L_1(0,T;\dot B^s_{p,1}(\R^n_+))}\\\leq 
C(\|f\|_{ L_1(0,T;\dot B^s_{p,1}(\R^n_+))}+\|u_0\|_{\dot B^s_{p,1}(\R^n_+)}),
\end{multline}
where $C$ is an absolute constant with no dependence on $\nu$ and $T$.
\end{theorem}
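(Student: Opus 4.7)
The plan is to reduce Theorem \ref{th:half} to the whole-space result (Theorem \ref{th:whole}) by odd reflection across $\d\R^n_+=\{x_n=0\}$. Given the data, I define odd extensions $\tilde u_0$ and $\tilde f$ by setting $\tilde v(x',x_n,\cdot):=-v(x',-x_n,\cdot)$ for $x_n<0$, and then solve on $\R^n$ the problem $U_t-\nu\Delta U=\tilde f$ with $U|_{t=0}=\tilde u_0$ using Theorem \ref{th:whole}. The heat equation is invariant under the symmetry $V(x',x_n,t):=-U(x',-x_n,t)$, and since both $\tilde f$ and $\tilde u_0$ are fixed by $v\mapsto -v\circ\sigma$ (where $\sigma(x',x_n):=(x',-x_n)$), the function $V$ solves the same whole-space Cauchy problem as $U$. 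The uniqueness part of Theorem \ref{th:whole} forces $V=U$, so $U$ is odd in $x_n$; in particular its trace at $x_n=0$ vanishes whenever meaningful, and $u:=U|_{(0,T)\times\R^n_+}$ solves \eqref{h1}.

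The technical core is the boundedness of the odd-extension operator from $\dot B^s_{p,1}(\R^n_+)$ into $\dot B^s_{p,1}(\R^n)$ on the \emph{full} range $-1+1/p<s<1/p$. Using the density of $\cC_c^\infty(\R^n_+)$ in $\dot B^s_{p,1}(\R^n_+)$ in this range (Proposition \ref{p:density} together with the subsequent Remark, which adapts it to the homogeneous scale), it suffices to work on smooth compactly supported functions. For $v\in\cC_c^\infty(\R^n_+)$, the odd extension $\tilde v$ belongs to $\cC_c^\infty(\R^n)$ (it vanishes near $\{x_n=0\}$) and decomposes as
$$
\tilde v = v^0 - v^0\circ\sigma,
$$
where $v^0$ denotes trivial extension by zero. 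Since $\sigma$ is a measure-preserving isometric diffeomorphism and $s\in(-1,1)$, Lemma \ref{l:compo} gives the continuity of $w\mapsto w\circ\sigma$ on $\dot B^s_{p,1}(\R^n)$. It remains to prove the sharp boundedness of $v\mapsto v^0$ from $\dot B^s_{p,1}(\R^n_+)$ to $\dot B^s_{p,1}(\R^n)$: for $0<s<1/p$ this follows from the intrinsic finite-difference characterization of $\dot B^s_{p,1}$ (the singular integral defining the norm remains finite after completion by zero precisely below the trace index), and the range $-1+1/p<s<0$ is recovered by duality via Proposition \ref{p:duality} (applied with the dual exponents $p',\infty$ and regularity $-s$), with Proposition \ref{p:interpolation} filling in any intermediate value.

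Once this boundedness is in hand, the conclusion is routine. From the bounds $\|\tilde u_0\|_{\dot B^s_{p,1}(\R^n)}\lesssim\|u_0\|_{\dot B^s_{p,1}(\R^n_+)}$ and $\|\tilde f\|_{L_1(0,T;\dot B^s_{p,1}(\R^n))}\lesssim\|f\|_{L_1(0,T;\dot B^s_{p,1}(\R^n_+))}$ (the latter with constant uniform in $t$, so measurability and the $L_1$-in-time structure are preserved), estimate \eqref{w2} for $U$ together with the trivial restriction inequality $\|u(t)\|_{\dot B^s_{p,1}(\R^n_+)}\le\|U(t)\|_{\dot B^s_{p,1}(\R^n)}$ yields \eqref{h3}, and continuity in time $u\in\cC([0,T);\dot B^s_{p,1}(\R^n_+))$ follows by restriction. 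Uniqueness for \eqref{h1} reduces to uniqueness on $\R^n$ by odd-extending the difference of two solutions. The main obstacle is the sharp boundedness of the odd reflection on the Besov scale precisely over $(-1+1/p,1/p)$; the endpoints are genuinely delicate since trace (for $s\nearrow 1/p$) or dual-trace (for $s\searrow -1+1/p$) obstructions appear and the argument cannot be pushed beyond.
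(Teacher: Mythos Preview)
Your approach is essentially the same as the paper's: odd (antisymmetric) extension of the data to $\R^n$, application of Theorem \ref{th:whole}, and use of the uniqueness there to conclude that the whole-space solution is odd in $x_n$, hence vanishes on $\{x_n=0\}$ and restricts to a solution of \eqref{h1}. The only differences are in emphasis: the paper invokes Proposition \ref{p:density} (and implicitly Remark \ref{r:besov}) directly for the boundedness of the antisymmetric extension on $\dot B^s_{p,1}$ without your decomposition $\tilde v=v^0-v^0\circ\sigma$, and it spells out the parity of each second-order derivative of $\tilde u$ (antisymmetric for $\d_t\tilde u$, $\nabla_{x'}^2\tilde u$, $\d_{x_n}^2\tilde u$, symmetric for $\nabla_{x'}\d_{x_n}\tilde u$) rather than appealing to the trivial restriction inequality as you do---both routes give \eqref{h3}.
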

\begin{proof} We argue by symmetrization.  Let $\tilde u_0$ and $\tilde f$ be the antisymmetric 
extensions over $\R^n$ to the data $u_0$ and $f.$ Then, given our assumptions over $s$ and Proposition \ref{p:density}, 
one may assert that $\tilde u_0\in\dot B^s_{p,1}(\R^n),$ $\tilde f\in L_1(0,T;\dot B^s_{p,1}(\R^n))$ and that, in addition
$$
\|\tilde u_0\|_{\dot B^s_{p,1}(\R^n)}\approx \| u_0\|_{\dot B^s_{p,1}(\R^n_+)}\quad\hbox{and}\quad
\|\tilde f\|_{L_1(0,T;\dot B^s_{p,1}(\R^n))}\approx \| f\|_{L_1(0,T;\dot B^s_{p,1}(\R^n_+))}.
$$
Let $\tilde u$ be the solution given by Theorem \ref{th:whole}. 
As this solution is unique in the corresponding functional framework, the symmetry properties of the data ensure
that $\tilde u$ is antisymmetric with respect to $\{x_n=0\}.$ As a consequence, it vanishes over $\{x_n=0\}.$
Hence the restriction $u$ of $\tilde u$ to the half-space satisfies \eqref{h1}. 
In addition,
\begin{itemize}
\item $\tilde u_t$ coincides with the antisymmetric extension of $u_t,$
\item  $\nabla^2_{x'}\tilde u$ coincides with the antisymmetric extension of $\nabla^2_{x'}u,$
\item $\nabla_{x'}\d_{x_n}\tilde u$ coincides with the symmetric extension of $\nabla_{x'}\d_{x_n}u,$
\item $\d^2_{x_n,x_n}\tilde u=(\Delta-\Delta_{x'})\tilde u$ hence coincides with 
$\tilde u_t-\tilde f-\Delta_{x'}\tilde u.$
\end{itemize}
Hence one may conclude that 
$$\displaylines{\quad
\|u\|_{L_\infty(0,T;\dot B^s_{p,1}(\R^n_+))}+
  \|u_t,\nu\nabla^2 u\|_{ L_1(0,T;\dot B^s_{p,1}(\R^n_+))}\hfill\cr\hfill\leq 
  \|\tilde u\|_{L_\infty(0,T;\dot B^s_{p,1}(\R^n))}+
  \|\tilde u_t,\nu\nabla^2\tilde u\|_{ L_1(0,T;\dot B^s_{p,1}(\R^n))}.\quad}
$$
This implies \eqref{h3}.
\end{proof}
\begin{remark} The case of  non-homogeneous boundary conditions  where $u$ equals some given $h$ at the boundary,
 reduces to the homogeneous case : it is only a matter of assuming that $h$ admits some
extension $\tilde h$ over $(0,T)\times\R^n_+$ so that 
$\tilde h_t-\nu\Delta\tilde h\in L_1(0,T;\dot B^s_{p,1}(\R^n_+)).$ 
\end{remark}


\subsection{The exterior domain case}

Here we extend Theorem \ref{th:whole} to the case where 
$\Omega$ is an exterior domain (that is the complement of a bounded simply connected
domain). Here is our main statement:

\begin{theorem}\label{th:heat-fin}
 Let  $\Omega$ be a $C^2$ exterior domain of $\R^n$ with $n\geq3.$
 Let $1<q\leq p<\infty$ with $q< n/2.$ Let  $-1 +1/p < s < 1/p$
 and $-1+1/q<s'<1/q-2/n.$ Let
$$u_0\in \dot B^s_{p,1}\cap \dot B^{s'}_{q,1}(\Omega)\quad\hbox{and}\quad  f\in L_1(0,T;\dot B^s_{p,1}\cap \dot B^{s'}_{q,1}(\Omega)).
$$
Then there exists a unique solution $u$ to \eqref{eq:heat} such that
$$
u\in\cC([0,T];\dot B^{s}_{p,1}\cap\dot B^{s'}_{q,1}(\Omega)), \qquad 
u_t,\nabla^2u \in L_1(0,T;\dot B^s_{p,1}\cap B^{s'}_{q,1}(\Omega))
$$
and the following inequality is satisfied:
 \begin{multline}\label{fin1}
 \|u\|_{L_\infty(0,T;\dot B^s_{p,1}\cap \dot B^{s'}_{q,1}(\Omega))}\! 
+\! \| u_t,\nu\nabla^2 u\|_{L_1(0,T;\dot B^s_{p,1}\cap \dot B^{s'}_{q,1}(\Omega))} 
\\\leq C\big( \|u_0\|_{\dot B^s_{p,1}\cap \dot B^{s'}_{q,1}(\Omega)}+
\|f\|_{L_1(0,T;\dot B^s_{p,1}\cap \dot B^{s'}_{q,1}(\Omega))} \big),\qquad 
\end{multline}
where the constant $C$ is independent of $T$ and $\nu.$
\end{theorem}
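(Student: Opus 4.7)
The plan is to follow the localization strategy sketched in the introduction. Fix a smooth cutoff $\eta\in\cC_c^\infty(\R^n)$ with $\eta\equiv 1$ in a neighborhood of $\R^n\setminus\Omega$, and decompose $u=(1-\eta)u+\eta u$. Extended by zero, the far-field piece $(1-\eta)u$ solves a whole-space heat equation with source $(1-\eta)f+\nu[\Delta,\eta]u$ and initial datum $(1-\eta)u_0$, the commutator being supported in the bounded annular set $K:=\{\nabla\eta\neq 0\}\subset\Omega$. The boundary piece $\eta u$ lives in a tubular neighborhood of $\d\Omega$; using a finite $C^2$ atlas that straightens $\d\Omega$, a subordinate partition of unity and Lemma \ref{l:compo}, each local piece is pulled back to a half-space problem for an operator that is a small perturbation of $-\nu\Delta$ modulo further compactly supported lower-order error terms.

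Applying Theorem \ref{th:whole} to the far-field piece and Theorem \ref{th:half} to the straightened boundary pieces, and recombining via Propositions \ref{p:compbesov}, \ref{p:productdomain} and Corollary \ref{c:stabprod}, yields an estimate of the shape
\begin{multline*}
\|u\|_{L_\infty(0,T;X)}+\|u_t,\nu\nabla^2u\|_{L_1(0,T;X)}\\
\leq C\bigl(\|u_0\|_X+\|f\|_{L_1(0,T;X)}\bigr)+C\nu\bigl\|u,\nabla u\bigr\|_{L_1(0,T;X(K))},
\end{multline*}
where $X:=\dot B^s_{p,1}\cap\dot B^{s'}_{q,1}(\Omega)$ and $X(K)$ denotes the same norm restricted to the compact set $K$. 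The top-order contribution from the change of variables is absorbed on the left by choosing the charts small enough, using $\d\Omega\in C^2$; and the $\nabla u$-part of the remainder is controlled, via an interpolation on $K$, by $\ep\|\nabla^2 u\|_{L_1(X)}+C_\ep\|u\|_{L_1(X(K))}$, reducing the obstruction to absorbing an $L_1$-in-time norm of $u$ alone on $K$.

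The hard part will be to close the estimate by bounding $\nu\|u\|_{L_1(0,T;X(K))}$ uniformly in $T$. In a bounded domain the spectral gap of the Dirichlet Laplacian gives exponential semigroup decay and an immediate absorption; in an exterior domain the Dirichlet heat semigroup $(e^{\nu t\Delta_D})_{t>0}$ decays only algebraically, at best like $t^{-n/2}$ in $L_q\to L_\infty$. My plan is to represent $u$ by Duhamel with the Dirichlet semigroup and split $\int_0^T\|u(t)\|_{X(K)}\,dt=\int_0^1+\int_1^T$. On $(0,1)$ standard parabolic smoothing from the strong norm $\dot B^s_{p,1}$ suffices; on $(1,T)$ one exploits the $L_q\to L_p$ decay of $e^{\nu t\Delta_D}$ with rate $t^{-n(1/q-1/p)/2}$, together with the Besov regularisation $\dot B^{s'}_{q,1}\to\dot B^{s'+2}_{q,1}$ of cost $t^{-1}$. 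The hypotheses $n\geq 3$, $q<n/2$ and $s'<1/q-2/n$ are exactly what forces the resulting rate to be integrable at $+\infty$ and the embedding $\dot B^{s'+2}_{q,1}(K)\hookrightarrow X(K)$ to hold.

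Combining the three steps gives \eqref{fin1} with a constant independent of $T$ and $\nu$; uniqueness follows by applying the estimate to the difference of two solutions. The whole reason for introducing the second index $(s',q)$ is precisely this final absorption step: the weak, ``low-frequency'' reservoir $\dot B^{s'}_{q,1}(\Omega)$ is the only norm in which the Dirichlet heat semigroup on an exterior domain exhibits an $L_1$-in-time integrable decay, which is why the mixed space and the lower bound $n\geq 3$ cannot be avoided.
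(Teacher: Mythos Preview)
Your overall architecture is that of the paper: localize, apply Theorems~\ref{th:whole} and~\ref{th:half}, arrive at an estimate with a lower-order remainder $\nu\|u\|_{L_1(0,T;X(K))}$ on a compact $K$ (this is the paper's Theorem~\ref{th:ext}, inequality~\eqref{estaux}), and then absorb that remainder using the algebraic decay of the Dirichlet heat semigroup (the paper's Lemma~\ref{l:vitesse}). So the skeleton is right.

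The place where your sketch goes wrong is the decay mechanism. You propose to combine the $L_q\to L_p$ rate $t^{-n(1/q-1/p)/2}$ with parabolic regularisation at cost $t^{-1}$. But the remainder contains $\|u\|_{L_1(0,T;\dot B^{s'}_{q,1}(K))}$, for which no change of integrability from $q$ to $p$ helps; in the admissible case $p=q$ your combined rate is merely $t^{-1}$, which is not integrable on $(1,\infty)$. What actually drives the absorption is the \emph{compactness of $K$}: since $K$ is bounded, $\|v\|_{\dot B^{s'}_{q,1}(K)}\lesssim_K\|v\|_{\dot B^{s'}_{a,1}(\Omega)}$ for every $a\geq q$, and the semigroup bound $\|e^{t\Delta_D}\|_{\dot B^{s'}_{q,1}\to\dot B^{s'}_{a,1}}\lesssim t^{-n(1/q-1/a)/2}$ then yields a rate as close to $t^{-n/(2q)}$ as one likes, integrable exactly when $q<n/2$. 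The paper implements this by duality: one tests against a heat solution $\eta$ with \emph{compactly supported} initial datum $\eta_0\in\dot B^{-s'}_{q',\infty}(K)$, so that $\eta_0$ lies in every $\dot B^{-s'}_{b,\infty}$ and the decay~\eqref{lv13} can be invoked with $b$ close to $1$. Your direct Duhamel route can be salvaged, but only after you identify the integrability gain as coming from $K$, not from the pair $(q,p)$. A further point you do not address: once Lemma~\ref{l:vitesse} handles $\|u\|_{L_1(0,T;\dot B^{s'}_{q,1}(K))}$, the remainder $\|u\|_{L_1(0,T;\dot B^{s}_{p,1}(K))}$ with $p\geq n/2$ still has to be treated; the paper does this by a short bootstrap using the embedding $\dot B^{s'+2}_{q,1}\hookrightarrow\dot B^{s'}_{q^*,1}$ with $1/q^*=1/q-2/n$, interpolating on $K$ and iterating.
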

Proving this theorem  relies on the 
following statement (that is of independent interest and holds in any 
dimension $n\geq2$), and on lower order estimates (see Lemma \ref{l:vitesse} below)
which will enable us to remove the time dependency.
\begin{theorem}\label{th:ext} 
Let $\Omega$ be a $C^2$ exterior domain of  $\R^n$ with $n\geq2.$
 Let $1<p<\infty$, $-1+1/p<s<1/p$, $f \in L_1(0,T;\dot B^s_{p,1}(\Omega))$,
 and $u_0\in \dot B^s_{p,1}(\Omega)$. 
 Then   equation \eqref{eq:heat} has a unique solution $u$  such that
$$
u\in\cC([0,T];\dot B^s_{p,1}(\Omega)), \qquad 
\d_tu,\nabla^2u \in L_1(0,T;\dot B^s_{p,1}(\Omega))
$$
and the following estimate is valid:
\begin{multline}\label{estdep}
\|u\|_{L_\infty(0,T;\dot B^s_{p,1}(\Omega))}\!+\!
\|u_t,\nu\nabla^2 u\|_{L_1(0,T;\dot B^s_{p,1}(\Omega))}
\\\leq Ce^{CT\nu}\big(\|u_0\|_{\dot B^{s}_{p,1}(\Omega)}+\|f\|_{L_1(0,T;\dot B^s_{p,1}(\Omega))}\big),
\end{multline}
where the constant $C$ depends only  on $s,$ $p,$ and $\Omega.$ 
\smallbreak
Additionally  if  $K$ is a compact subset of $\Omega$ such that 
${\rm dist}(\partial \Omega,\Omega\setminus K)>0,$ there holds
\begin{multline}\label{estaux}
\|u\|_{L_\infty(0,T;\dot B^s_{p,1}(\Omega))}\!+\!
 \|u_t,\nu\nabla^2 u\|_{L_1(0,T;\dot B^s_{p,1}(\Omega))}\qquad\qquad\qquad\\[9pt]\leq C\bigl(\|u_0\|_{\dot B^{s}_{p,1}(\Omega)}
+\|f\|_{L_1(0,T;\dot B^s_{p,1}(\Omega))}+\nu\|u\|_{L_1(0,T;\dot B^s_{p,1}(K))}\bigr),\qquad
\end{multline}
where $C$ is as above.
\end{theorem}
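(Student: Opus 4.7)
The plan is to localize \eqref{eq:heat} via a $C^2$ partition of unity $1=\theta_0+\sum_{k=1}^N\theta_k$ on a neighborhood of $\overline\Omega$, where $\theta_0$ is supported at positive distance from $\partial\Omega$ and each $\theta_k$ ($k\geq1$) is supported in a small coordinate patch $U_k$ on which a $C^2$ diffeomorphism $Z_k$ flattens $\partial\Omega\cap U_k$ onto $\{x_n=0\}$; the diameters of the $U_k$'s will be fixed later. Each $u_k:=\theta_k u$ satisfies, with zero Dirichlet data and initial datum $\theta_k u_0,$
\begin{equation*}
\partial_t u_k-\nu\Delta u_k=\theta_k f-2\nu\nabla\theta_k\cdot\nabla u-\nu(\Delta\theta_k)u.
\end{equation*}
The interior piece $u_0$, trivially extended by zero, satisfies this equation on $\R^n$; by Remark \ref{r:besov} and Proposition \ref{p:compbesov} the extension lies in $\dot B^s_{p,1}(\R^n)$ with equivalent norm, so Theorem \ref{th:whole} bounds it by the data and by $\nu\|\nabla u,u\|_{L_1(0,T;\dot B^s_{p,1}(K_0))}$ with $K_0=\supp\nabla\theta_0$.

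For each boundary piece, the pullback $v_k:=u_k\circ Z_k^{-1}$, extended by zero, lives in the same Besov scale by Lemma \ref{l:compo} (applicable since $s\in(-1+1/p,1/p)\subset(-1,1)$) and satisfies on $\R^n_+$ with zero boundary value
\begin{equation*}
\partial_t v_k-\nu\Delta v_k=\widetilde g_k+\nu(\Delta-\cL_k)v_k,
\end{equation*}
where $\cL_k$ is the Laplacian in the new coordinates and $\widetilde g_k$ is the pullback of the right-hand side above. Writing $\Delta-\cL_k=a^{ij}_k(x)\partial^2_{ij}+b^i_k(x)\partial_i$, the $C^1$ coefficients $a^{ij}_k$ vanish at a chosen boundary point, so shrinking $\mathrm{diam}(U_k)$ makes $\|a^{ij}_k\|_{L_\infty(\supp v_k)}$ arbitrarily small; Propositions \ref{p:product} and \ref{p:productdomain} then give
\begin{equation*}
\|(\Delta-\cL_k)v_k\|_{\dot B^s_{p,1}(\R^n_+)}\leq\eta\|\nabla^2 v_k\|_{\dot B^s_{p,1}(\R^n_+)}+C_\eta\|\nabla v_k,v_k\|_{\dot B^s_{p,1}(\R^n_+)}
\end{equation*}
with $\eta\to 0$ as $\mathrm{diam}(U_k)\to0$. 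Applying Theorem \ref{th:half} and absorbing $\eta\nu\|\nabla^2 v_k\|_{L_1\dot B^s_{p,1}}$ into the left-hand side (possible once $\eta$ is small, independently of $\nu$) yields the required bound for $v_k$, which transfers back to $u_k$ through Lemma \ref{l:compo}.

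Summing over $k$, using Proposition \ref{p:productdomain} to handle the cutoffs appearing in the data terms, and denoting by $\Psi(T)$ the left-hand side of \eqref{estdep}, we obtain
\begin{equation*}
\Psi(T)\leq C\bigl(\|u_0\|_{\dot B^s_{p,1}(\Omega)}+\|f\|_{L_1(0,T;\dot B^s_{p,1}(\Omega))}\bigr)+C\nu\|u,\nabla u\|_{L_1(0,T;\dot B^s_{p,1}(K))},
\end{equation*}
where $K:=\bigcup_k\supp\nabla\theta_k\Subset\Omega$. To eliminate the $\nabla u$ contribution, pick a cutoff $\widetilde\theta$ equal to $1$ on $K$ and supported in a slightly larger compact $K'\Subset\Omega$, and use the real-interpolation inequality (Proposition \ref{p:interpolation}) between $\dot B^s_{p,1}$ and $\dot B^{s+2}_{p,1}$ together with Young, followed by Proposition \ref{p:productdomain}, to get
\begin{equation*}
\|\nabla u\|_{\dot B^s_{p,1}(K)}\leq\|\widetilde\theta u\|_{\dot B^{s+1}_{p,1}(\Omega)}\leq\eta\|\nabla^2 u\|_{\dot B^s_{p,1}(\Omega)}+C_\eta\|u\|_{\dot B^s_{p,1}(K')}.
\end{equation*}
Taking $\eta\nu$ small and absorbing into $\Psi(T)$ produces \eqref{estaux}.

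Finally, \eqref{estdep} follows from \eqref{estaux} by Gronwall: since $\|u(t)\|_{\dot B^s_{p,1}(K)}\leq C\Psi(t)$, inequality \eqref{estaux} reads $\Psi(T)\leq C(\mathrm{data})+C\nu\int_0^T\Psi(t)\,dt$, whence $\Psi(T)\leq C(\mathrm{data})\,e^{C\nu T}$. Uniqueness is immediate from \eqref{estdep} applied to the difference of two solutions with zero data, and existence is obtained by approximating the data by smooth compactly supported functions, invoking classical heat semi-group theory to produce smooth solutions on $\Omega$, and passing to the limit using the uniform bound \eqref{estdep}. The main obstacle in this plan is the combined absorption step: one has to control simultaneously the principal-part perturbation in each boundary patch (forcing $\mathrm{diam}(U_k)$ to be small) and the first-derivative commutator $\nabla\theta_k\cdot\nabla u$ (which is not lower-order at the scale $\dot B^s_{p,1}$), and the nested-cutoff interpolation trick above is precisely the reason why \eqref{estaux} must retain the compact-set norm of $u$ on its right-hand side.
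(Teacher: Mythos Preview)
Your proposal is correct and follows essentially the same localization--flattening--absorption scheme as the paper's proof: interior pieces via Theorem \ref{th:whole}, boundary pieces via Theorem \ref{th:half} after straightening (with the second-order coefficient perturbation absorbed by shrinking the patches), and the commutator terms $\nabla\theta_k\cdot\nabla u$ handled by the $\dot B^{s}$--$\dot B^{s+2}$ interpolation plus Young. The only cosmetic difference is that the paper first reduces to $\nu=1$ and obtains \eqref{estdep} by iterating the short-time estimate on intervals of fixed length, whereas you keep $\nu$ and close \eqref{estdep} from \eqref{estaux} by Gronwall; the two routes are equivalent.
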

\begin{proof}
We suppose  that we have a smooth enough 
solution and focus on the proof of the estimates.
We shall do it in three steps: first we prove interior estimates, 
next boundary estimates and finally global estimates after summation.

Note that performing the following  change of unknown and data:
$$
u_{new}(t,x)=\nu u_{old}(\nu^{-1}t,x),\quad
u_{0,new}(x)=\nu u_{0,old}(x),\quad
f_{new}(t,x)=f_{old}(\nu^{-1}t,x)
 $$
reduces the study to the case $\nu=1.$ 
So we shall make this assumption in all that follows.
\smallbreak
Throughout  we fix some covering $(B(x^\ell,\lambda))_{1\leq\ell\leq L}$ of $K$
by balls of radius $\lambda$ and take some neighborhood $\Omega^0\subset\Omega$ of $\R^n\setminus K$
such that $d(\Omega^0,\d\Omega)>0.$ 
We assume in addition that the first $M$ balls do not intersect $K$ while the last $L-M$ balls
are centered at some point of $\d\Omega.$

Let  $\eta^0:\R^n\rightarrow[0,1]$ be  a smooth  function  supported in $\Omega^0$ and 
with  value $1$ on a neighborhood of $\Omega\setminus K.$  
Then we  consider
 a subordinate partition of unity $(\eta^\ell)_{1\leq\ell\leq L}$ such that:
 \begin{enumerate}
\item  $\sum_{0\leq l\leq L}\eta^\ell=1\quad\hbox{on }\ \Omega$;
\item $\|\nabla^k\eta^\ell\|_{L_\infty(\R^n)}\leq C_k\lambda^{-k}$ for $k\in\N$ and $1\leq\ell\leq L$;
\item $\Supp\eta^\ell\subset B(x^\ell,\lambda).$
\end{enumerate}
We also introduce another smooth function $\tilde\eta^0$ supported
in $K$ and with value $1$ on $\Supp\nabla\eta^0$
and smooth functions  $\tilde\eta^1,\cdots,\tilde\eta^L$ 
with compact support in $\Omega^\ell$ and such that $\tilde\eta^\ell\equiv1$ on
$\Supp\eta^\ell.$ 

Note that for $\ell\in\{1,\cdots,L\},$  the bounds for the derivatives of $\eta^\ell$ 
together with the fact that $\bigl|\Supp\nabla\eta^\ell\bigr|\approx\lambda^n$ 
and Proposition \ref{p:interpolation} implies that for $k=0,1$ and any $q\in[1,\infty],$ we have
\begin{equation}\label{eq:lambda}
\|\nabla\eta^\ell\|_{\dot B^{k+n/q}_{q,1}(\R^n)}\lesssim\lambda^{-1-k}.
\end{equation}
The same holds for the functions $\tilde\eta^\ell.$
Throughout, we set    $U^\ell:=u\eta^\ell$.


\subsubsection*{First step: the interior estimate}

The vector-field  $U^0$ satisfies
the following modification of \eqref{eq:heat}:
\begin{equation}\label{i1}
 \begin{array}{lcr}
 U_t^0-\Delta U^0 =\eta^0f-2\nabla \eta^0\cdot\nabla u -  u \Delta \eta^0 
& \mbox{in} & (0,T)\times\R^n,\\[1ex]
U^0|_{t=0}=u_0  \eta^0 & \mbox{on} & \R^n.
\end{array}
\end{equation}
Theorem \ref{th:whole}  thus yields the following estimate:
$$
\displaylines{
\|U^0\|_{L_\infty(0,T;\dot B^s_{p,1}(\R^n))}+
 \|U_t^0,\nabla^2 U^0\|_{L_1(0,T;\dot B^s_{p,1}(\R^n))}
\lesssim\|\eta^0f\|_{L_1(0,T;\dot B^s_{p,1}(\R^n))} \hfill\cr\hfill+\|\nabla \eta^0\cdot\nabla u\|_{L_1(0,T;\dot B^s_{p,1}(\R^n))} +
\|u\Delta\eta^0\|_{L_1(0,T;\dot B^s_{p,1}(\R^n))} +
\|\eta^0 u_0\|_{\dot B^s_{p,1}(\R^n)}.}$$
Let us emphasize that as $\nabla \eta^0\cdot\nabla u$ and $u \Delta \eta^0$ are compactly supported, 
we may replace the homogeneous norms by non-homogeneous ones in the first two terms. 
As a consequence, because the function $\nabla\eta^0$ is in $\cC_c^\infty(\R^n)$
and $\tilde \eta^0\equiv1$ on $\Supp\nabla \eta^0,$ 
Corollary \ref{c:stabprod} ensures that 
\begin{multline}\label{i4a}
\|U^0\|_{L_\infty(0,T;\dot B^s_{p,1}(\R^n))}+
 \|U_t^0,\nabla^2 U^0\|_{L_1(0,T;\dot B^s_{p,1}(\R^n))}\\\lesssim
  \|\eta^0u_0\|_{\dot B^{s}_{p,1}(\R^n))}+
\|\eta^0f\|_{L_1(0,T;\dot B^s_{p,1}(\R^n))}+ \|\tilde\eta^0u\|_{L_1(0,T;B^{s+1}_{p,1}(\R^n))}.
\end{multline}

Now, by interpolation,
\begin{equation}\label{eq:interpo0}
\|\tilde\eta^0u\|_{B^{1+s}_{p,1}(\Omega)}\leq  C
\|\tilde\eta^0u\|_{B^{2+s}_{p,1}(\Omega)}^{\frac12}
\|\tilde\eta^0u\|_{B^s_{p,1}(\Omega)}^{\frac12}.
\end{equation}
As $\Supp\tilde\eta^0\subset K$ and as homogeneous and nonhomogeneous norms are equivalent on $K,$
 one may thus conclude that 
\begin{multline}\label{i5}
 \|U^0\|_{L_\infty(0,T;\dot B^s_{p,1}(\R^n))}+
 \|U^0_t,\nabla^2 U^0\|_{L_1(0,T;\dot B^s_{p,1}(\R^n))}\lesssim
 \|f\|_{L_1(0,T;\dot B^s_{p,1}(\Omega))}\\
 +T^{1/2}\|u\|_{L_1(0,T;\dot B^{2+s}_{p,1}(K))\cap L_\infty(0,T;\dot B^s_{p,1}(K))}+\|u_0 \|_{\dot B^s_{p,1}(\Omega)}.
\end{multline}
Note that starting from \eqref{eq:interpo0} and using Young's inequality also yields  for all $\ep>0$:  
\begin{multline}\label{i8}
 \|U^0\|_{L_\infty(0,T;\dot B^s_{p,1}(\R^n))}+
 \|U^0_t,\nabla^2 U^0\|_{L_1(0,T;\dot B^s_{p,1}(\R^n))}
 \leq C\bigl(\|u_0\|_{\dot B^s_{p,1}(\Omega)}\\+\|f\|_{L_1(0,T;\dot B^s_{p,1}(\Omega))}\bigr)
+\varepsilon \|u\|_{ L_1(0,T;\dot B^{2+s}_{p,1}(K))}
+c(\varepsilon) \|u\|_{ L_1(0,T;\dot B^s_{p,1}(K))}.
\end{multline}
The terms $U^\ell$ with $1\leq\ell\leq M$ may be bounded 
exactly along the same lines because their support do not meet $\d\Omega,$ hence
their extension by $0$ over $\R^n$ satisfies
 $$
 \begin{array}{lcr}
 U_t^\ell-\Delta U^\ell =f^\ell & \mbox{in} & (0,T)\times\R^n,\\
U^\ell|_{t=0}=u_0  \eta^\ell & \mbox{on} & \R^n
\end{array}
$$
with 
\begin{equation}\label{eq:fl0}
f^\ell:=-2 \nabla \eta^\ell\cdot\nabla u -  u \Delta \eta^\ell+\eta^\ell f.
\end{equation}
Arguing as above and taking advantage of the fact that the functions $\eta^\ell$
are localized in balls of radius $\lambda$  
(that is we use \eqref{eq:lambda}),  we now get
\begin{multline}\label{eq:fl}
\|f^\ell\|_{L_1(0,T;\dot B^s_{p,1}(\Omega))}\lesssim \|\eta^\ell f\|_{L_1(0,T;\dot B^s_{p,1}(\Omega))}
\\+\lambda^{-2}\|\tilde\eta^\ell u\|_{L_1(0,T;\dot B^s_{p,1}(\Omega))}
+\lambda^{-1}\|\tilde\eta^\ell\nabla u\|_{L_1(0,T;\dot B^s_{p,1}(\Omega))}.
\end{multline}
Using again \eqref{eq:interpo0} (with $\tilde\eta^\ell$ instead of $\tilde\eta^0$), we get
\begin{multline}\label{i5bis}
 \|U^\ell\|_{L_\infty(0,T;\dot B^s_{p,1}(\R^n))}+
 \|U^\ell_t,\nabla^2 U^\ell\|_{L_1(0,T;\dot B^s_{p,1}(\R^n))}\lesssim
 \|\eta^\ell f\|_{L_1(0,T;\dot B^s_{p,1}(\Omega))}\\
 +\bigl(\lambda^{-1}T^{1/2}+\lambda^{-2}T\bigr)
 \|u\|_{L_1(0,T;\dot B^{2+s}_{p,1}(K))\cap L_\infty(0,T;\dot B^s_{p,1}(K))}
 +\|u_0\eta^\ell \|_{\dot B^s_{p,1}(\Omega)},
\end{multline}  
\begin{multline}\label{i8bis}
 \|U^\ell\|_{L_\infty(0,T;\dot B^s_{p,1}(\R^n))}+
 \|U^\ell_t,\nabla^2 U^\ell\|_{L_1(0,T;\dot B^s_{p,1}(\R^n))}
 \leq C\bigl(\|u_0\eta^\ell\|_{\dot B^s_{p,1}(\Omega)}\\+\|\eta^\ell f\|_{L_1(0,T;\dot B^s_{p,1}(\Omega))}\bigr)
+\lambda^{-1}\|u\|_{ L_1(0,T;\dot B^{2+s}_{p,1}(K))}^{1/2}\|u\|_{ L_1(0,T;\dot B^s_{p,1}(K))}^{1/2}.
\end{multline}


\subsubsection*{Second step: the boundary estimate}

 We now consider an index  $\ell \in\{L+1,\cdots,M\}$ so that  $B(x^\ell,\lambda)$
 is  centered at a point of $\d\Omega.$
The localization leads to the following problem:
\begin{equation}\label{i1-bd}
 \begin{array}{lcr}
 U_t^\ell- \Delta U^\ell =f^\ell & \mbox{in} & (0,T)\times \Omega,\\
U^\ell=0 & \mbox{on}& (0,T)\times \d \Omega,\\
U_t^\ell|_{t=0}=u_0 \eta^\ell & \mbox{on} & \Omega,
\end{array}
\end{equation}
with $f^\ell$ defined by \eqref{eq:fl0}, hence satisfying \eqref{eq:fl}. 
\medbreak
 Let us now make a change of variables so as to recast \eqref{i1-bd} in the  half-space.
As $\d\Omega$ is $C^2,$ 
 if $\lambda$ has been chosen small enough then for fixed $\ell$ we are able to find
a map $Z_\ell$ so that
\begin{enumerate}
\item[i)] $Z_\ell$ is a $C^2$ diffeomorphism from $B(x^\ell,\lambda)$ 
to $Z_\ell(B(x^\ell,\lambda))$;
\item[ii)] $Z_\ell(x^\ell)=0$ and   $D_xZ(x^\ell)=\Id$;
\item[iii)] $Z_\ell(\Omega\cap B(x^\ell,\lambda))\subset \R^n_+$;
\item[iv)]  $Z_\ell(\d\Omega\cap B(x^\ell,\lambda))= \d\R_+^n\cap Z_\ell(B(x^\ell,\lambda)).$
\end{enumerate}
Setting $\nabla_x Z_\ell=\Id+A_\ell$ then one may assume in addition that 
 there exist constants $C_j$ depending only on $\Omega$ 
 and on $j\in\{0,1\}$ such that
 \begin{equation}\label{eq:b0}
 \|D^j A_\ell\|_{L_\infty(B(x^\ell,\lambda))}\leq  C_j,
 \end{equation}
 a property  which implies (by the mean value formula) that 
 \begin{equation}\label{eq:b1}
 \|A_\ell\|_{L_\infty(B(x^\ell,\lambda))}\leq C_1\lambda,
 \end{equation}
 hence  by interpolation between the spaces $L_q(B(x^\ell,\lambda))$ and $W^{r-1}_q(B(x^\ell,\lambda)),$
  \begin{equation}\label{eq:b00}
 \|A_\ell\|_{B^{\frac nq}_{q,1}(B(x^\ell,\lambda))}
 \leq C\lambda\quad\hbox{for all }\ 1\leq q<\infty
 \ \hbox{ such that }\  n/q<r-1.
 \end{equation}

 Let $V^\ell:=Z_\ell^*U^\ell:=U^\ell\circ Z_\ell^{-1}.$ The system satisfied by
$V^\ell$ reads 
\begin{equation}\label{b1}
 \begin{array}{lcl}
 V_t^\ell- \Delta_z V^\ell =F^\ell
& \mbox{in} & (0,T)\times\R^n_+,\\
V^\ell|_{z_n=0}=0 & \mbox{on} & (0,T)\times\d\R^{n}_+,\\
V^\ell|_{t=0}=Z^*_\ell(U^\ell|_{t=0})  & \mbox{on} & \d\R^{n}_+,\\
\end{array}
\end{equation}
with 
$$
F^\ell:= Z_\ell^*f^\ell+(\Delta_x-\Delta_z)V^\ell.
$$
According to Theorem \ref{th:half}, we thus get 
$$\displaylines{
\|V^\ell\|_{L_\infty(0,T;\dot B^s_{p,1}(\R^n_+))}+
 \|V_t^\ell,\nabla^2_zV^\ell\|_{L_1(0,T;\dot B^s_{p,1}(\R^n_+))}\hfill\cr\hfill\lesssim
\|Z_\ell^*f^\ell \|\|_{L_1(0,T;\dot B^s_{p,1}(\R^n_+))}
+(\Delta_x-\Delta_z)V^\ell)\|_{L_1(0,T;\dot B^s_{p,1}(\R^n_+))}
+\|Z_\ell^*(U^\ell|_{t=0})\|_{\dot B^s_{p,1}(\R^n_+)}.}
$$
 Note that the first and last terms in the right-hand side may be dealt with thanks
 to Lemma \ref{l:compo}: we have
 $$\begin{array}{l}
 \|Z_\ell^*f^\ell\|_{L_1(0,T;\dot B^s_{p,1}(\R^n_+))}
 \lesssim \|f^\ell\|_{L_1(0,T;\dot B^s_{p,1}(\Omega))}\\[1ex]
 \|Z_\ell^*(U^\ell|_{t=0})\|_{\dot B^s_{p,1}(\R^n_+)}\lesssim
 \|U^\ell|_{t=0}\|_{\dot B^s_{p,1}(\Omega)}.\end{array}
$$
Compared to the first step, the only definitely  new term is   $(\Delta_x-\Delta_z)V^\ell.$
Explicit computations (see e.g. \cite{DM-ext}) show that
$(\Delta_z-\Delta_x)V^\ell$ is a linear combination of components  of  
$\nabla_z^2 A_\ell \otimes V^\ell$ and $\nabla_z A_\ell \otimes\nabla_z V^\ell.$
Therefore
$$\displaylines{\quad
\|(\Delta_x-\Delta_z)V^\ell)\|_{L_1(0,T;\dot B^s_{p,1}(\R^n_+))}
\lesssim \|A_\ell\otimes\nabla_z^2 V^\ell\|_{L_1(0,T;\dot B^{s}_{p,1}(\R^n_+))}
\hfill\cr\hfill+\|\nabla_z A_\ell\otimes\nabla_zV^\ell\|_{L_1(0,T;\dot B^s_{p,1}(\R^n_+))}.\quad}
$$

Now, according to Proposition \ref{p:productdomain} and owing  
to the support properties of the terms involved in the inequalities,, we have 
$$
\|A_\ell\otimes\nabla_z^2 V^\ell\|_{L_1(0,T;\dot B^{s}_{p,1}(\R^n_+))}
\lesssim  \|A_\ell\|_{\dot B^{\frac nq}_{q,1}(B(x^\ell,\lambda))}\|\nabla^2_z V^\ell\|_{\dot B^s_{p,1}(\R^n_+)}\quad\!\!\!\hbox{with}\quad\!\!\!
q=\min(p,p').
$$
Therefore we have, thanks to \eqref{eq:b1} and to \eqref{eq:b00}, 
$$
\|A_\ell\otimes\nabla_z^2 V^\ell\|_{L_1(0,T;\dot B^{s}_{p,1}(\R^n_+))}
\lesssim \lambda \|\nabla^2_z V^\ell\|_{\dot B^s_{p,1}(\R^n_+)}.
$$
Similarly, we have 
$$
\|\nabla_z A_\ell\otimes\nabla_zV^\ell\|_{L_1(0,T;\dot B^s_{p,1}(\R^n_+))}
\lesssim \|\nabla_zV^\ell\|_{L_1(0,T;\dot B^s_{p,1}(\R^n_+))}.
$$
Therefore
$$
\|(\Delta_x-\Delta_z)V^\ell\|_{L_1(0,T;\dot B^s_{p,1}(\R^n_+))}\lesssim \lambda\|\nabla^2_z V^\ell\|_{L_1(0,T;\dot B^s_{p,1}(\R^n_+))}
+\|\nabla_z V^\ell\|_{L_1(0,T;\dot B^s_{p,1}(\R^n_+))}.
 $$
 Putting together the above inequalities and remembering of \eqref{eq:fl} and Lemma \ref{l:compo}, we finally get, taking
 $\lambda$ small enough
$$\displaylines{
\|V^\ell\|_{L_\infty(0,T;\dot B^s_{p,1}(\R^n_+))}+
 \|V_t^\ell,\nabla^2_z V^\ell\|_{L_1(0,T;\dot B^s_{p,1}(\R^n_+))} \hfill\cr\hfill
 \lesssim \|U^\ell|_{t=0}\|_{\dot B^s_{p,1}(\Omega)} 
 +\|\eta^\ell f\|_{L_1(0,T;\dot B^s_{p,1}(\Omega))} \hfill\cr\hfill
+\lambda^{-2}\|\tilde\eta^\ell u\|_{L_1(0,T;\dot B^s_{p,1}(\Omega))}
+\lambda^{-1}\|\tilde\eta^\ell\nabla u\|_{L_1(0,T;\dot B^s_{p,1}(\Omega))}
+\|\nabla V^\ell\|_{L_1(0,T;\dot B^{s}_{p,1}(\R^n_+))}.}
$$
 
By interpolation, we have $$
 \|\nabla V^\ell\|_{L_1(0,T;\dot B^s_{p,1}(\R^n_+))} \lesssim
  \|\nabla^2 V^\ell\|^{1/2}_{L_1(0,T;\dot B^s_{p,1}(\R^n_+))}
  \|V^\ell\|^{1/2}_{L_1(0,T;\dot B^s_{p,1}(\R^n_+))}.
  $$
  Therefore using Young's inequality enables us to reduce the above inequality to 
  $$\displaylines{
\|V^\ell\|_{L_\infty(0,T;\dot B^s_{p,1}(\R^n_+))}+
 \|V_t^\ell,\nabla^2 V^\ell\|_{L_1(0,T;\dot B^s_{p,1}(\R^n_+))}\hfill\cr\hfill
 \lesssim \|U^\ell|_{t=0}\|_{\dot B^s_{p,1}(\Omega)}  
 +\|\eta^\ell f\|_{L_1(0,T;\dot B^s_{p,1}(\Omega))}+\|V^\ell\|_{L_1(0,T;\dot B^{s}_{p,1}(\R^n_+))}
\hfill\cr\hfill+\lambda^{-2}\|\tilde\eta^\ell u\|_{L_1(0,T;\dot B^s_{p,1}(\Omega))}
+\lambda^{-1}\|\tilde\eta^\ell\nabla u\|_{L_1(0,T;\dot B^s_{p,1}(\Omega))}.}
$$

In order to handle the last term, there are two ways of proceeding depending
on whether we want a time dependent constant or not. 
The first possibility is to write that, by interpolation and H\"older's inequality, 
$$
\begin{array}{lll}
\|\tilde\eta^\ell \nabla u\|_{L_1(0,T;\dot B^{s}_{p,1}(\Omega))}
&\leq& T^{1/2}\|u\|_{L_1(0,T;\dot B^{s+2}_{p,1}(K))\cap L_\infty(0,T;\dot B^{s}_{p,1}(K))}.
\end{array}
$$
This yields 
  \begin{multline}\label{b5}
\|V^\ell\|_{L_\infty(0,T;\dot B^s_{p,1}(\R^n_+))}+
 \|V_t^\ell,\nabla^2 V^\ell\|_{L_1(0,T;\dot B^s_{p,1}(\R^n_+))} \\
 \lesssim \|\eta^\ell u_0\|_{\dot B^s_{p,1}(\Omega)}
 +\|\eta^\ell f\|_{L_1(0,T;\dot B^s_{p,1}(\Omega))}+T\|V^\ell\|_{L_\infty(0,T;\dot B^s_{p,1}(\R^n_+))}\\
+\bigl(\lambda^{-1} T^{1/2}+\lambda^{-2}T\bigr)
 \|u\|_{L_1(0,T;\dot B^{s+2}_{p,1}(K))\cap L_\infty(0,T;\dot B^{s}_{p,1}(K))}.
\end{multline}
The second possibility is to write that
$$
\|\tilde\eta^\ell \nabla u\|_{L_1(0,T;\dot B^{s}_{p,1}(\Omega))}\leq \|u\|_{L_1(0,T;\dot B^{s+2}_{p,1}(K))}^{\frac12} 
\|u\|_{L_1(0,T;\dot B^{s}_{p,1}(K))}^{\frac12}.
$$
We eventually get
  \begin{multline}\label{b8}
\|V^\ell\|_{L_\infty(0,T;\dot B^s_{p,1}(\R^n_+))}+
 \|V_t^\ell,\nabla^2 V^\ell\|_{L_1(0,T;\dot B^s_{p,1}(\R^n_+))}  \lesssim \|\eta^\ell u_0\|_{\dot B^s_{p,1}(\Omega)}
 \\+\|\eta^\ell f\|_{L_1(0,T;\dot B^s_{p,1}(\Omega))}+ \lambda^{-1}\|u\|_{L_1(0,T;\dot B^{2+s}_{p,1}(K))}^{1/2}\|u\|_{ L_1(0,T;\dot B^s_{p,1}(K))}^{1/2}\\
 +\lambda^{-2}\|u\|_{L_1(0,T;\dot B^s_{p,1}(K))}
+\|\nabla V^\ell\|_{L_1(0,T;\dot B^s_{p,1}(\R^n_+))}.
\end{multline}


\subsubsection*{Third step: global a priori estimates}
Now, in view of Lemma \ref{l:compo}, we may write
$$\begin{array}{lll}
\|u\|_{L_\infty(0,T;\dot B^s_{p,1}(\Omega))}
&\leq& \Sum_\ell\|U^\ell\|_{L_\infty(0,T;\dot B^s_{p,1}(\Omega))}\\[2ex]
&\lesssim& 
 \Sum_{0\leq\ell\leq M}\|U^\ell\|_{L_\infty(0,T;\dot B^s_{p,1}(\R^n))}
+\Sum_{M<\ell\leq L}\|V^\ell\|_{L_\infty(0,T;\dot B^s_{p,1}(\R^n_+))},\end{array}
$$
and similar inequalities for the other terms
of the l.h.s of \eqref{b5}. 
Of course, Proposition \ref{p:product} ensures that 
$$
\|u_0^\ell\|_{\dot B^s_{p,1}(\Omega)}\lesssim
\|u_0\|_{\dot B^s_{p,1}(\Omega)}\quad\hbox{and}\quad
 \|\tilde\eta^\ell f\|_{L_1(0,T;\dot B^s_{p,1}(\Omega))}\lesssim
 \|f\|_{L_1(0,T;\dot B^s_{p,1}(\Omega))}.
 $$
So using also \eqref{i5} and \eqref{i5bis} and assuming that $T$ is small enough, we end up with 
$$\displaylines{
\|u\|_{L_\infty(0,T;\dot B^s_{p,1}(\Omega))}
+\|(u_t,\nabla^2u)\|_{L_1(0,T;\dot B^s_{p,1}(\Omega))}
\lesssim 
\|u_0\|_{\dot B^s_{p,1}(\Omega)}\hfill\cr\hfill
 +\|f\|_{L_1(0,T;\dot B^s_{p,1}(\Omega))} 
+(\lambda^{-1} T^{1/2}+\lambda^{-2}T) 
 \|u\|_{L_1(0,T;\dot B^{s+2}_{p,1}(K))\cap L_\infty(0,T;\dot B^{s}_{p,1}(K))}.}
 $$
 Hence if in addition $\lambda^{-2}T$ is small enough,
$$
\displaylines{
\|u\|_{L_\infty(0,T;\dot B^s_{p,1}(\Omega))}+\|u_t,\nabla^2 u\|_{L_1(0,T;\dot B^s_{p,1}(\Omega))} 
\leq  C\Bigl(\|u_0\|_{B^s_{p,1}(\R^n)} 
+ \|f\|_{L_1(0,T;\dot B^s_{p,1}(\Omega))}\Bigr).}
$$
Repeating the argument over the interval $[T,2T]$ and so on, 
we get exactly Inequality \eqref{estdep}.
\medbreak

If we want to remove the time-dependency then it is just a matter of starting  from \eqref{b8} and \eqref{i8bis} instead
of \eqref{b5} and \eqref{i5}.
After a few computation and thanks to Young's inequality, we get for
some constant $C$ depending on $\lambda,$
$$\displaylines{
\|u_t,\nabla^2 u\|_{L_1(0,T;\dot B^s_{p,1}(\Omega))} \leq
 C(\|u_0\|_{\dot B^s_{p,1}(\Omega)}+\|f\|_{L_1(0,T;\dot B^s_{p,1}(\R^n_+))}
+\|u\|_{ L_1(0,T;\dot B^s_{p,1}(K))}).}
$$
For completeness, let us  say a few words about the existence, which is rather 
standard issue (see e.g. \cite{LaUrSol}).
If the domain is smooth then
the easiest approach is via the $L_2$-framework and Galerkin method. We may 
consider smooth approximations of data $f$ and $u_0$, such that to keep them in the space $H^m$
with sufficiently large $m\in \N$. Then the energy method provides us with 
approximate solutions in Sobolev spaces $H^m$ with large $m.$ 
In particular, the above a priori estimates  \eqref{estdep}  may be derived
for such solutions. It is then easy  to pass to the limit.

\end{proof}

\begin{remark} Let us emphasize that the term $\|u\|_{L_1(0,T;\dot B^s_{p,1}(K))}$ may be replaced by other
  lower order norms such as $\|u\|_{L_1(0,T;\dot B^{s'}_{p,1}(K))}$ with $s'\not=s$ close to $0.$
  In particular $s'$ may be  put to zero. 
\end{remark}

In order to complete the proof of Theorem \ref{th:heat-fin}, we now have to 
bound the last term of \eqref{estaux}, namely  $\|u\|_{L_1(0,T;\dot B^s_{p,1}(K))},$
 \emph{independently of} $T.$
This is the goal of the next lemma (where we keep the assumption that $\nu=1$). 
We here adapt to the heat equation  an approach that has been proposed
for the Stokes system in \cite{MarSol}.

\begin{lemma}\label{l:vitesse} Assume that $n\geq3$ and that  $1<p<n/2.$
Then for any 
$s\in(-1+1/p,1/p-2/n)$  
sufficiently smooth solutions to \eqref{eq:heat}  fulfill
$$
\|u\|_{L_1(0,T;\dot B^s_{p,1}(K))} \leq C(\|f\|_{L_1(0,T;\dot B^s_{p,1}(\Omega))}+ \|u_0\|_{\dot B^s_{p,1}(\Omega)}),
$$
where $C$ is \emph{independent of  $T$.}
\end{lemma}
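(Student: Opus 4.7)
The plan is to combine a duality argument with decay estimates for the Dirichlet heat semigroup $\{e^{\sigma\Delta_\Omega}\}_{\sigma\geq0}$ on the exterior domain $\Omega$, in the spirit of the Stokes approach of \cite{MarSol}. Fix a smooth cutoff $\chi\in\cC_c^\infty(\Omega)$ with $\chi\equiv1$ on $K$, and enlarge $K$ if needed so that $\Supp\chi\subset K$. Since $s\in(-1+1/p,1/p)$, Proposition \ref{p:duality} yields
\begin{equation*}
\|u\|_{L_1(0,T;\dot B^s_{p,1}(K))}\lesssim\sup\int_0^T\!\!\int_\Omega u(t,x)\,g(t,x)\,dx\,dt,
\end{equation*}
the supremum running over $g\in L_\infty(0,T;\dot B^{-s}_{p',\infty}(\R^n))$ such that $\Supp g(t,\cdot)\subset K$ and $\|g\|_{L_\infty(0,T;\dot B^{-s}_{p',\infty})}\leq1$. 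To such a $g$ I would associate the solution $\phi$ of the backward Dirichlet problem
\begin{equation*}
-\phi_t-\Delta\phi=g\ \text{in}\ (0,T)\times\Omega,\qquad\phi|_{\d\Omega}=0,\qquad\phi|_{t=T}=0,
\end{equation*}
and integrate by parts against $u$ in $(0,T)\times\Omega$: using the boundary and final conditions,
\begin{equation*}
\int_0^T\!\!\int_\Omega u\,g\,dx\,dt=\int_\Omega u_0\,\phi(0,\cdot)\,dx+\int_0^T\!\!\int_\Omega f\,\phi\,dx\,dt.
\end{equation*}
Pairing as in Proposition \ref{p:duality} then gives
\begin{equation*}
\left|\int_0^T\!\!\int_\Omega u\,g\,dx\,dt\right|\leq C\bigl(\|u_0\|_{\dot B^s_{p,1}(\Omega)}+\|f\|_{L_1(0,T;\dot B^s_{p,1}(\Omega))}\bigr)\sup_{t\in(0,T)}\|\phi(t)\|_{\dot B^{-s}_{p',\infty}(\Omega)},
\end{equation*}
so the whole matter reduces to obtaining a \emph{time-independent} bound for $\sup_t\|\phi(t)\|_{\dot B^{-s}_{p',\infty}(\Omega)}$.

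By Duhamel, $\phi(t)=\int_t^T e^{(\tau-t)\Delta_\Omega}g(\tau)\,d\tau$, hence it suffices to establish the integrable bound
\begin{equation*}
\int_0^{+\infty}\|e^{\sigma\Delta_\Omega}g_0\|_{\dot B^{-s}_{p',\infty}(\Omega)}\,d\sigma\leq C\|g_0\|_{\dot B^{-s}_{p',\infty}(\R^n)},
\end{equation*}
uniformly for $g_0$ supported in $K$. For small $\sigma$, the standard short-time smoothing of the heat kernel supplies a locally $L_1$-in-$\sigma$ bound. For large $\sigma$, the key ingredient, special to $n\geq3$, is the Gaussian upper bound on the Dirichlet heat kernel of $\Omega$: it yields the global $L_{q_1}$--$L_{q_2}$ decay
\begin{equation*}
\|e^{\sigma\Delta_\Omega}h\|_{L_{q_2}(\Omega)}\leq C\sigma^{-(n/2)(1/q_1-1/q_2)}\|h\|_{L_{q_1}(\Omega)},\qquad 1\leq q_1\leq q_2\leq\infty,
\end{equation*}
\emph{uniformly in} $\sigma>0$. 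Since $g_0$ is compactly supported in $K$, Proposition \ref{p:compbesov} and Besov--Lebesgue embeddings control $\|g_0\|_{L_{q_1}(\Omega)}$ by $\|g_0\|_{\dot B^{-s}_{p',\infty}(\R^n)}$ for an appropriate $q_1$ close to $p'$, while a dual embedding transfers the large-$\sigma$ Lebesgue bound back to the target Besov norm on $\Omega$.

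I expect the main difficulty to be the clean passage from Lebesgue decay to the Besov target norm while preserving integrability in $\sigma$; this calls for a careful tracking of supports and of the smoothing/decay balance in the semigroup. The assumptions $p<n/2$ and $s<1/p-2/n$ should be precisely what is needed to pick an admissible pair $(q_1,q_2)$ with exponent $(n/2)(1/q_1-1/q_2)>1$: indeed, $p<n/2$ creates enough room to lower $1/q_1$, and $s<1/p-2/n$ absorbs the Besov--Lebesgue embedding losses on the $\dot B^{-s}_{p',\infty}$ side. Once the integrable bound is established, it yields a uniform control of $\|\phi(t)\|_{\dot B^{-s}_{p',\infty}(\Omega)}$ and hence of $\|u\|_{L_1(0,T;\dot B^s_{p,1}(K))}$, with constant independent of $T$, proving the lemma.
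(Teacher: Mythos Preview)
Your strategy is essentially the paper's: duality against a compactly supported test datum, combined with algebraic decay of the Dirichlet heat semigroup on the exterior domain, with the condition $p<n/2$ ensuring the decay exponent exceeds~$1$. The organization differs only cosmetically: the paper splits $u=u_1+u_2$, dualizes at each fixed~$t$ against an $\eta_0\in\dot B^{-s}_{p',\infty}$ supported in~$K$, obtains the pointwise-in-$t$ representation $\int_\Omega u_1(t)\eta_0=\int_0^t\!\int_\Omega f(\tau)\,\eta(t-\tau)$, and only then integrates in~$t$. Your space--time duality with the backward problem for~$\phi$ telescopes these steps; after Duhamel it reduces to the same semigroup bound.

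There is, however, a genuine gap in the step you flag as ``the main difficulty''. Your proposed route to $\int_0^\infty\|e^{\sigma\Delta_\Omega}g_0\|_{\dot B^{-s}_{p',\infty}}\,d\sigma<\infty$ goes through $L_{q_1}\!\to\!L_{q_2}$ decay together with Besov--Lebesgue embeddings on both ends. This works when $s\leq0$, but for $s>0$ the space $\dot B^{-s}_{p',\infty}$ has negative regularity and contains distributions that are not locally integrable; hence no bound of the type $\|g_0\|_{L_{q_1}}\lesssim\|g_0\|_{\dot B^{-s}_{p',\infty}}$ is available, even with the compact support in~$K$. The paper circumvents this by proving a \emph{Besov-to-Besov} decay estimate
\[
\|e^{\sigma\Delta_\Omega}\eta_0\|_{\dot B^{-s}_{a,\infty}(\Omega)}\lesssim \sigma^{-\frac n2(\frac1b-\frac1a)}\|\eta_0\|_{\dot B^{-s}_{b,\infty}(\Omega)},\qquad 1<b\leq a<\infty,\ -1+\tfrac1a<-s<\tfrac1b,
\]
obtained by interpolating the $L_b\!\to\!L_a$ estimate with the corresponding one for $\Delta\eta$ (which also satisfies the Dirichlet heat problem, since $\eta$ vanishes on $\partial\Omega$), and then dualizing to reach negative regularity indices. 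Once you have this, the compact support of $g_0$ lets you lower the input integrability exponent to~$b$ close to~$1$ at the Besov level (via Proposition~\ref{p:compbesov} and Corollary~\ref{c:stabprod}), and the rest of your argument goes through unchanged.
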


\begin{proof}
Thanks to the linearity of the system, one may  split the solution $u$
into two parts, the first one $u_1$ being the solution of the system with zero initial data and source term $f$, and the second one $u_2,$
 the solution of the system with no source
term and initial data $u_0.$ In other words,  $u=u_1+u_2$ with $u_1$ and
$u_2$ satisfying 
\begin{equation}\label{lv14}
 \begin{array}{lcrlcr}
 u_{1,t}-\Delta u_1 =f& {\rm in} &(0,T)\times\Omega,\qquad\qquad & u_{2,t}-\Delta u_2 =0 & {\rm in} &(0,T)\times\Omega,\\
u_1=0 & {\rm on} & (0,T)\times\d\Omega,\qquad\qquad  & u_2=0 & {\rm on} &(0,T)\times \d \Omega,\\
u_1|_{t=0}=0& {\rm on} &\Omega,\qquad\qquad & u_2|_{t=0}=u_0 & {\rm on} &\Omega.
\end{array}
\end{equation}
Let us first focus on  $u_1$. Recall that up to a constant we have (see Proposition \ref{p:duality}): 
\begin{equation}\label{lv15}
\|u_1(t)\|_{\dot B^s_{p,1}(K)}=\sup \int_Ku_1(t,x) \eta_0(x)\, dx,
\end{equation}
where the supremum is taken over all  $\eta_0 \in \dot B^{-s}_{p',\infty}(K)$ such 
that $\| \eta_0 \|_{ \dot B^{-s}_{p',\infty}(K)}=1.$
Of course, by virtue of Remark \ref{r:besov}, any such function $\eta_0$  may be extended by $0$ over $\R^n,$
and its extension still has a norm of order $1.$ So we may assume that the supremum 
is taken over all  
\begin{equation}\label{eq:eta0}
\eta_0 \in \dot B^{-s}_{p',\infty}(\R^n)\quad\hbox{with }\ \|\eta_0\|_{\dot B^{-s}_{p',\infty}(\R^n)}=1
\quad\hbox{and}\quad\Supp\eta_0\subset K.
\end{equation}
Consider 
the solution $\eta$  to the following problem:
\begin{equation}\label{lv1}
\begin{array}{lcr}
\eta_t-\Delta \eta =0 \qquad & \hbox{in} & (0,T)\times\Omega,\\
\eta=0 & \hbox{on} & (0,T)\times\d \Omega,\\
\eta|_{t=0}=\eta_0 & \hbox{on} & \Omega.
\end{array}
\end{equation}
Testing the equation for  $u_1$ by $\eta(t-\cdot)$ we discover that
\begin{equation}\label{lv2}
\int_{\Omega} u_1(t,x) \eta_0(x)\,dx=\int_0^t \int_\Omega f(\tau,x) \eta(t-\tau,x) \,dx\,d\tau.
\end{equation}
The general theory for the heat  operator in exterior domains
implies the following estimates:
\begin{equation}\label{lv3}
\|\eta(t)\|_{L_a(\Omega)} \leq C\|\eta_0\|_{L_b (\Omega)} t^{-\frac{n}{2}(\frac{1}{b}-\frac{1}{a})}
\quad\hbox{for}\quad 1<b\leq a<\infty,
\end{equation}
 as well as
\begin{equation}\label{lv3bis}
\|\Delta \eta(t)\|_{L_a(\Omega)} \leq 
C\|\Delta\eta_0\|_{L_b (\Omega)} t^{-\frac{n}{2}(\frac{1}{b}-\frac{1}{a})}\quad\hbox{for}\quad 1<b\leq a<\infty.
\end{equation}

In the case $\Omega=\R^n,$
 those two inequalities may be derived easily from the (explicit) heat kernel. 
 To prove \eqref{lv3} in the case of an exterior domain,  it is enough to look at solutions to \eqref{lv1} as  subsolutions to the problem in  the whole space. 
 More precisely, if we assume that  $\eta_0\geq 0$ (this is not
 restrictive for one may consider the positive and negative part of the initial data separately), we get a solution to 
\eqref{lv1} defined over $(0,\infty)\times\Omega$ such that $\eta  \geq 0$. Then we consider an extension
$E\eta:\R^n \to \R$ of $\eta$, 
such that $E\eta = \eta$ for $x\in \Omega$ and $E\eta =0$ for $x\notin \Omega$. We claim that $E\eta$ is a subsolution to the Cauchy problem
\begin{equation}\label{cau}
 \bar \eta_t - \Delta \bar \eta =0 \mbox{ in } (0,T)\times\R^n \  \mbox{ with }\  \bar \eta|_{t=0}=E\eta_0.
\end{equation}
It is sufficient to show that $\eta \leq \bar \eta$, since $\bar \eta$ is always nonnegative. It is clear that 
\begin{equation}
 (\eta-\bar \eta)_t -\Delta (\eta- \bar\eta) =0 \ \mbox{ in } \ (0,T)\times\Omega.
\end{equation}
Consider $(\eta -\bar \eta)_+:=\max \{ \eta -\bar \eta, 0 \}.$ It    is obvious that $(\eta -\bar \eta)_+$ vanishes at the boundary, because $\eta$
is zero and $\bar \eta$ is nonnegative there. Hence we conclude
\begin{equation}
 \frac 12 \frac{d}{dt} \int_\Omega (\eta-\bar \eta)^2_+ dx + \int_\Omega |\nabla (\eta-\bar \eta)_+|^2 dx=0.
\end{equation}
Thus, $(\eta-\bar \eta)_+\equiv 0$, since $(\eta-\bar \eta)_+|_{t=0}=0$. So $\eta$ is bounded by $\bar \eta$.

 To prove \eqref{lv3bis} we  observe that for the smooth solutions the equation implies that 
$\Delta \eta|_{\d\Omega} =0$, so we can consider the problem on 
$\Delta \eta$  instead of $\eta$.
Now, as $\eta$ vanishes at the boundary, we have (see e.g. \cite{GT})
\begin{equation}\label{lv3ter}
\|\nabla^2 \eta\|_{L_c(\Omega)}\leq  \| \Delta \eta\|_{L_c(\Omega)}\quad\hbox{for all }\ 1<c<\infty.
\end{equation}
Hence, interpolating between \eqref{lv3} and \eqref{lv3bis} yields
 for $0<s<1/b.$

\begin{equation}\label{lv6}
\|\eta(t) \|_{\dot B^s_{b,r}(\Omega)} \leq C \|\eta_0\|_{\dot B^s_{a,r}(\Omega)} t^{-\frac{n}{2}(\frac{1}{p}-\frac{1}{q})}
\mbox{ ~~ for ~ } 1< a\leq b <\infty\ \hbox{ and }\ 1\leq r\leq\infty.
\end{equation}
In order to extend this inequality  to negative indices $s,$ 
we consider the following dual problem:
\begin{equation}\label{lv7}
\begin{array}{lcc}
\zeta_t-\Delta \zeta =0 & \hbox{in} & (0,T)\times\Omega,\\
\zeta=0 & \hbox{on} & (0,T)\times\d \Omega,\\
\zeta|_{t=0}=\zeta_0 & \hbox{on} & \Omega,
\end{array}
\end{equation}
where  $\zeta_0 \in B^{-s}_{b',r'}(\Omega)$. 

Now, testing \eqref{lv7} by $\eta(t-\cdot)$ yields
\begin{equation}\label{lv8}
\int_\Omega \eta(t,x) \zeta_0(x) \,dx =\int_\Omega \eta_0(x) \zeta(t,x)\, dx.
\end{equation}
Let us  observe that
\begin{equation}\label{lv9}
\|\eta(t)\|_{\dot B^s_{b,r}(\Omega)} =\sup_{\zeta_0} \int_{\Omega}
\eta(t,x) \zeta_0(x)\, dx,
\end{equation}
where the supremum is taken over all $\zeta_0 \in \dot B^{-s}_{b',r'}(\Omega)$ such that 
$\|\zeta_0\|_{ \dot B^{-s}_{b',r'}(\Omega)}=1$. Thus by virtue of \eqref{lv8}, we get:
\begin{equation}\label{lv10}
\|\eta(t)\|_{\dot B^s_{b,r}(\Omega)}=\sup_{\zeta_0} \int_\Omega \eta_0(x) \zeta(t,x)\,dx 
\leq \sup_{\zeta_0}
\Bigl(\|\eta_0\|_{\dot B^s_{a,r}(\Omega)} \|\zeta(t)\|_{\dot B^{-s}_{a',r'}(\Omega)}\Bigr). 
\end{equation}
Since $-s$ is positive we can apply \eqref{lv6}  and get if $0<-s<1/a',$
$$
\|\eta(t)\|_{\dot B^s_{b,r}(\Omega)} 
\leq C\|\eta_0\|_{\dot B^s_{a,r}(\Omega)} t^{-\frac n2 (\frac{1}{a'}-\frac{1}{b'})}
\sup_{\zeta_0} \|\zeta_0\|_{\dot B^{-s}_{b',r'}(\Omega)}. 
$$
Since $\frac{1}{a'}-\frac{1}{b'}=\frac 1b -\frac 1a,$ we conclude that
\begin{equation}\label{lv12}
\|\eta(t)\|_{\dot B^s_{a,r}(\Omega)}\leq 
C\|\eta_0\|_{\dot B^s_{b,r}(\Omega)}  t^{-\frac n2 (\frac{1}{b}-\frac{1}{a})}\quad\hbox{if }\ s>-1+1/a. 
\end{equation}

In order to get the remaining case $s=0,$ 
it suffices to argue by interpolation between \eqref{lv6} and \eqref{lv12}. 
One can thus conclude that 
for all  $1<b\leq a<\infty,$ $q\in [1,\infty]$  and $-1+1/a<s<1/b,$  we have 
\begin{equation}\label{lv13}
\|\eta(t)\|_{\dot B^s_{a,r}(\Omega)}\leq C\|\eta_0\|_{\dot B^s_{b,r}(\Omega)}  t^{-\frac n2 (\frac{1}{b}-\frac{1}{a})}.  
\end{equation}

Now we return to the initial problem of bounding $u_1.$ 
Starting from \eqref{lv2} and using duality, one may write
$$
\biggl|\int_\Omega u_1(t,x) \eta_0(x)\,dx\biggr|\lesssim\int_0^t \|f(\tau)\|_{\dot B^s_{p,1}(\Omega)} 
\|\eta(t-\tau)\|_{\dot B^{-s}_{p',\infty}(\Omega)}\,d\tau. 
$$
Hence   splitting  the interval $(0,t)$ into 
$(0,\max(0,t-1))$ and $(\max(0,t-1),t)$ and applying \eqref{lv13} yields for any $\ep\in(0,1+s),$
$$
\displaylines{
\biggl|\int_\Omega u_1(t,x) \eta_0(x)\,dx\biggr|\lesssim\int_{\max(0,t-1)}^t \|f(\tau)\|_{\dot B^s_{p,1}(\Omega)} 
\|\eta_0\|_{\dot B^{-s}_{p',\infty}(\Omega)}\,d\tau 
\hfill\cr\hfill+
\int_0^{\max(0,t-1)} \|f(\tau)\|_{\dot B^s_{p,1}(\Omega)} \|\eta_0\|_{\dot B^{-s}_{\frac{1}{1-\ep},\infty}(\Omega)}
(t-\tau)^{-\frac n2(\frac 1p-\ep)}\,d\tau.}
$$
Now, as $\eta_0$ is supported in $K,$ one has
$\| \eta_0 \|_{\dot B^{-s}_{a,\infty}(\Omega)} \leq C|K|^{\frac{1}{p}+\frac1a-1}
\|\eta_0 \|_{\dot B^{-s}_{p',\infty}(\Omega)}.$
This may easily proved by introducing a suitable smooth  cut-off function with value
$1$ over $K$ and taking advantage of Proposition \ref{p:product}. 
A scaling argument yields the dependency of the norm of the embedding 
with respect to $|K|.$ 
Hence we have for some constant $C$ depending on~$K$:
$$ \|\eta_0\|_{\dot B^{-s}_{\frac{1}{1-\ep},\infty}}\leq C \|\eta_0 \|_{\dot B^{-s}_{p',\infty}(\Omega)}.$$
So, keeping in mind \eqref{lv2} and the fact that the supremum 
is taken over all the functions $\eta_0$ satisfying \eqref{eq:eta0}, 
we deduce that
$$\displaylines{\quad
\| u_1(t)\|_{\dot B^s_{p,1}(K)} \leq C \biggl( \int_{\max(0,t-1)}^t \|f(\tau)\|_{\dot B^s_{p,1}(\Omega)}\, d\tau
\hfill\cr\hfill
+\int_0^{\max(0,t-1)}(t-\tau)^{-\frac n2 (\frac{1}{p}-\epsilon)} \|f(\tau)\|_{\dot B^s_{p,1}(\Omega)}\, d\tau\biggr).\quad}
$$
Therefore,
\begin{equation}\label{lv19}
\int_1^T \| u_1\|_{\dot B^s_{p,1}(K)}\, dt \leq C\biggl(1+ \int_1^T \tau^{-\frac n2 (\frac{1}{p}-\epsilon)}\, d\tau\biggr)
\int_0^T \|f\|_{\dot B^s_{p,1}(\Omega)} \,dt.
\end{equation}
For the time interval $[0,1],$ we merely have  
\begin{equation*}\label{lv20}
\int_0^1 \|  u_1\|_{\dot B^s_{p,1}(K)} \,dt \leq C
\int_0^1\|f\|_{\dot B^s_{p,1}(\Omega)}\, dt.
\end{equation*}

Now, 
 provided that one may find some $\ep>0$ such  that 
\begin{equation}\label{lv18}
\frac n2 \Bigl(\frac{1}{p}-\epsilon\Bigr)>1,
\end{equation}
a condition which is equivalent to $p<n/2,$  the constant in 
\eqref{lv19} may be made independent of $T.$ Hence
 we conclude that
\begin{equation}
 \int_0^T \|u_1\|_{\dot B^s_{p,1}(K)}\,dt \leq C\int_0^T
 \|f\|_{\dot B^s_{p,1}(\Omega)}\,dt
\end{equation}
with $C$ independent of  $T$.
\medbreak
Let us now bound $u_2.$ We first write that
\begin{equation}\label{lv21}
\|u_2(t)\|_{\dot B^s_{p,1}(K)}  \leq C \|u_0\|_{\dot B^s_{p,1}(\Omega)} 
\end{equation}
and, if $-1+\ep<s<1/p,$
\begin{equation*}\label{lv22}
\|u_2(t)\|_{\dot B^s_{p,1}(K)}  \leq C|K|^{\frac{1}{p}-\epsilon} \|u_2(t)\|_{\dot B^s_{\frac 1\epsilon,1}(K)}\leq 
 C|K|^{\frac{1}{p}-\epsilon}  \|u_0\|_{\dot B^s_{p,1}(\Omega)} t^{-\frac n2 (\frac 1p -\epsilon)}.
\end{equation*}
Then decomposing the integral over $[0,T]$
into an integral over $[0,\min(1,T)]$ and  $[\min(1,T),T],$ we easily get
\begin{equation}\label{lv23}
\int_0^T \|u_2(t)\|_{\dot B^s_{p,1}(K)} \, dt \leq C\biggl(1+\int_{\min(1,T)}^T t^{-\frac n2 (\frac 1p -\epsilon)} \,dt\biggr)
\| u_0\|_{\dot B^s_{p,1}(\Omega)}. 
\end{equation}
The integrant in the r.h.s. of \eqref{lv23} is finite whenever  \eqref{lv18} holds.
Hence, \begin{equation}\label{lv24}
\int_0^T \|u_2(t)\|_{\dot B^s_{p,1}(K)}\,  dt \leq C\|u_0\|_{\dot B^s_{p,1}(\Omega)}.
\end{equation}

Putting this together with \eqref{lv19} and \eqref{lv20} completes
the proof of the lemma. 
\end{proof}
\medbreak
We are now ready to prove Theorem \ref{th:heat-fin}.
Granted with Theorem \ref{th:ext}, it is enough to show that 
$\|u\|_{L_1(0,T;\dot B^s_{p,1}(K)\cap \dot B^{s'}_{q,1}(K))}$ may be bounded by 
the right-hand side of \eqref{fin1}. 

As a matter of fact $\|u\|_{L_1(0,T;\dot B^{s'}_{q,1}(K))}$ may be directly bounded 
from Lemma \ref{l:vitesse}, and the same holds for 
$\|u\|_{L_1(0,T;\dot B^{s}_{p,1}(K))}$ if $p<n/2.$ 

If  $p\geq n/2,$ then we use the fact  so that
$$
\dot B^{s+2}_{q,1}(\Omega) \subset \dot B^s_{q^*,1}(\Omega)
\mbox{ ~~ with ~~ } \frac{1}{q^*}=\frac 1q - \frac 2n\cdotp
$$
Therefore, if $q<n/2\leq p < q^*$ then   one may combine  interpolation 
and Lemma \ref{l:vitesse} so as to absorb   
$\|u\|_{L_1(0,T;\dot B^s_{p,1}(K))}$ by the left-hand side of \eqref{fin1},
changing the constant $C$ if necessary. 

If $p\geq q^*$ then one may repeat the argument again and again
until  the all possible values of $p$ in $(n/2,\infty)$ are exhausted. 
 Theorem \ref{th:heat-fin} is proved.


\subsection{The bounded domain case}

We end this section with a few remarks concerning the case where $\Omega$ is a bounded
domain of $\R^n$ with $n\geq2.$ 
Then the proof of Theorem \ref{th:ext} is similar : we still have to introduce some suitable 
resolution of unity $(\eta^\ell)_{0\leq \ell\leq L}.$ The only difference is that,  now, 
$\eta^0$ has compact support. Hence  
 Theorem \ref{th:ext}   holds true with  $K=\overline\Omega.$
\medbreak
In order to remove the time dependency in the estimates, 
we use the fact (see e.g. \cite{Fridman})  that the solution $\eta$ to 
\eqref{lv1} satisfies for some $c>0,$
$$
\|\eta(t)\|_{L_p(\Omega)}\leq Ce^{-ct}\|\eta_0\|_{L_p(\Omega)},
$$
which also implies that
$$
\|\nabla^2\eta(t)\|_{L_p(\Omega)}\leq Ce^{-ct}\|\nabla^2\eta_0\|_{L_p(\Omega)}.
$$
Hence we  have for any $1<p<\infty$ and $-1+1/p<s<1/p,$
\begin{equation}\label{lv25}
\|\eta(t)\|_{\dot B^s_{p,1}(\Omega)}\leq Ce^{-ct}\|\eta_0\|_{\dot B^s_{p,1}(\Omega)}.
\end{equation}
Defining $u_1$ and $u_2$ as in \eqref{lv14}, one may thus write
$$
\biggl|\int_\Omega u_1(t,x)\eta_0(x)\,dx\biggr|\lesssim\|\eta_0\|_{\dot B^{-s}_{p',\infty}(\Omega)}
\int_0^t\|f(\tau)\|_{\dot B^s_{p,1}(\Omega)}e^{-c(t-\tau)}\,d\tau,
$$
thus giving 
$$
\|u_1\|_{L_1(0,T;\dot B^s_{p,1}(K))}\lesssim\|f\|_{L_1(0,T;\dot B^s_{p,1}(\Omega))}.
$$
Of course, we also have
$$
\|u_2\|_{L_1(0,T;\dot B^s_{p,1}(K))}\lesssim\|u_0\|_{\dot B^s_{p,1}(\Omega)}.
$$
So one may conclude that Lemma \ref{l:vitesse} holds true for \emph{any} $1<p<\infty$
and $-1+1/p<s<1/p.$ Consequently, we get:
\begin{theorem}\label{th:bounded} If $1<p<\infty$ and $-1+1/p<s<1/p$ then
the statement of Theorem \ref{th:half} remains true in any 
$C^2$ bounded domain.
\end{theorem}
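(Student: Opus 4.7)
The plan is to follow exactly the three-step scheme (interior estimate, boundary estimate, global summation) that led to Theorem \ref{th:ext}, exploiting two simplifications specific to the bounded case. First, since $\Omega$ is bounded, I may choose the ``exterior'' cut-off $\eta^0$ to have compact support in $\Omega$, so that the finite covering $(B(x^\ell,\lambda))_{1\le\ell\le L}$ already covers all of $\overline\Omega$. The localized interior bound \eqref{i8} and boundary bound \eqref{b8}, together with Corollary \ref{c:stabprod}, Lemma \ref{l:compo} and absorption of the $\nabla V^\ell$ term via Young's inequality, then yield after summation the analogue of \eqref{estaux} with $K=\overline\Omega$:
\begin{equation*}
\|u\|_{L_\infty(0,T;\dot B^s_{p,1}(\Omega))}+\|u_t,\nu\nabla^2 u\|_{L_1(0,T;\dot B^s_{p,1}(\Omega))}\le C\bigl(\|u_0\|_{\dot B^s_{p,1}(\Omega)}+\|f\|_{L_1(0,T;\dot B^s_{p,1}(\Omega))}+\nu\|u\|_{L_1(0,T;\dot B^s_{p,1}(\Omega))}\bigr).
\end{equation*}

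It remains to absorb the lower-order term $\nu\|u\|_{L_1(0,T;\dot B^s_{p,1}(\Omega))}$ with a constant \emph{independent of $T$}. To this end I would rerun the argument of Lemma \ref{l:vitesse}, splitting $u=u_1+u_2$ as in \eqref{lv14} and testing against a dual heat profile $\eta$ solving \eqref{lv1}. The only new ingredient needed is the replacement of the polynomial decay estimates \eqref{lv3}, \eqref{lv3bis}, \eqref{lv13} by their exponential counterparts on a bounded domain: one starts from the classical spectral-gap bounds $\|\eta(t)\|_{L_p(\Omega)}\le Ce^{-ct}\|\eta_0\|_{L_p(\Omega)}$ and, using $\Delta\eta|_{\partial\Omega}=0$, the same bound for $\nabla^2\eta$, interpolates for $0<s<1/p$, dualizes via the analogue of \eqref{lv7}--\eqref{lv12} to reach $-1+1/p<s<0$, and interpolates to fill in $s=0$, arriving at \eqref{lv25} for the full range $-1+1/p<s<1/p$.

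Given \eqref{lv25}, the conclusion is immediate: the duality identity \eqref{lv2}, combined with Young's convolution inequality on $(0,T)$ with the kernel $e^{-ct}$, yields
\begin{equation*}
\|u_1\|_{L_1(0,T;\dot B^s_{p,1}(\Omega))}\lesssim\|f\|_{L_1(0,T;\dot B^s_{p,1}(\Omega))}
\end{equation*}
uniformly in $T$, and directly integrating \eqref{lv25} on $u_2$ gives $\|u_2\|_{L_1(0,T;\dot B^s_{p,1}(\Omega))}\lesssim\|u_0\|_{\dot B^s_{p,1}(\Omega)}$. Substituting into the preceding inequality (after a rescaling reducing to $\nu=1$) proves the desired $T$-independent maximal regularity bound. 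Existence and uniqueness then follow via Galerkin approximation exactly as indicated at the end of the proof of Theorem \ref{th:ext}. The main obstacle I foresee is the derivation of \eqref{lv25} in the negative Besov range $-1+1/p<s<0$, where one must carry out the duality step carefully to preserve the exponential factor rather than pick up a polynomial one; everything else is a direct specialization of the exterior-domain argument, made genuinely simpler by the compactness of $\Omega$.
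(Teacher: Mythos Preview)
Your proposal is correct and follows essentially the same route as the paper: localize as in Theorem \ref{th:ext} with $K=\overline\Omega$ (since $\eta^0$ is now compactly supported), then remove the time dependency by replacing the polynomial decay of the dual profile by the spectral-gap exponential decay \eqref{lv25}, derived exactly as you outline via interpolation, duality, and interpolation again. The paper's own argument is in fact terser than yours on the passage to \eqref{lv25}; the duality step you flag as the main obstacle is indeed the only nontrivial point, and it goes through verbatim since the exponential factor is preserved under the pairing \eqref{lv8}.
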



\section{Applications}

In this last section, we give some application of the maximal regularity estimates that
have been proved hitherto. 
As an example, we prove global stability results (in a critical functional framework) for 
trivial/constant solutions to the following system:
\begin{equation}\label{ap1}
 \begin{array}{lcr}
  \vec u_t - \nu\Delta \vec u +P\cdot \nabla^2 \vec u= f_0(\vec u)+f_1(\vec u)\cdot\nabla \vec u & \mbox{ in } & (0,T)\times\Omega, \\
\vec u=0 & \mbox{ at } & (0,T)\times\d \Omega, \\
\vec u|_{t=0} = u_0 & \mbox{ on } & \Omega.
 \end{array}
\end{equation}
Above, $\nu$ is a positive parameter, $\vec u$ stands for a $r$-dimensional vector and 
$P=(P_1,\cdots,P_r)$  where the $P_k$'s are $n\times n$  matrices with 
suitably smooth coefficients.
The nonlinearities $f_0:\R^r\rightarrow\R^r$ and $f_1:\R^r\rightarrow\cM_{r,n}(\R)$
are $C^1$  and satisfy 
\begin{equation}\label{eq:nonlinear}
  f_0(0)=0, \quad df_0(0)=0\  \mbox{ and }\  f_1(0)=0,
\end{equation}
together with some growths conditions that will be detailed below. 
\medbreak
As we have in mind applications to Theorem \ref{th:heat-fin}, we focus on the case where
 $\Omega$ is a smooth exterior domain of $\R^n$ with $n\geq3.$
Of course, based on our other maximal regularity results, similar (and somewhat easier)
statements may be proved for bounded domains, $\R^n_+$  or $\R^n.$ 
\medbreak
Here are two  important examples entering in the class of equations \eqref{ap1}.
The first one is {\it the nonlinear heat transfer equation} (see \cite{Xin} and the references therein):
\begin{equation}\label{ap2}
 u_t-\nu\Delta u=f(u).
\end{equation}
A  classical form of the nonlinearity is $f(u)=Ku^2(u-u^*).$
However one may consider  more complex models describing a flame propagation like in \cite{LM}.
\medbreak
The second example is the  {\it viscous Burgers equation} \cite{Hill,Hor}
\begin{equation}\label{ap3}
 u_t+u \d_{x_1} u -\nu \Delta u =0.
\end{equation}
which   enters in the  class of models like
\begin{equation}\label{ap4}
 \vec u_t-\nu\Delta \vec u =B(\vec u,\nabla \vec u).
\end{equation}
In the case where $B(\vec u,\nabla\vec u)=-\vec u\cdot\nabla\vec u,$
this is just the equation for pressureless viscous gases with constant density.

Below, based on Theorem \ref{th:heat-fin}, we shall prove two global-in-time
results concerning the stability of the trivial solution of  System \eqref{ap1}.  
In the first statement, to  simplify the presentation, we only consider the case where the data 
belong to spaces with regularity index equals to $0.$ To simplify the notation, we omit
the dependency with respect to the domain $\Omega$ in all that follows.
\begin{theorem}\label{th:non}
Let $1<q<n/2$ and $\Omega$ be an exterior domain of $\R^n$ ($n\geq3$). 
There exist two positive constants $\eta$ and $c_\nu$  such that
for all $P:[0,\infty)\times\Omega\rightarrow\R^n\times\R^n\times\R^k$
satisfying\footnote{Below $\cM(X)$ denotes the \emph{multiplier space} associated
to the Banach space $X,$ that is the set of those functions $f$ such that 
$fg\in X$ whenever $g$ is in $X$ endowed with the norm
$\|f\|_{\cM(X)}:=\inf_g \|fg\|_X$ where the infimum is taken over all $g\in X$ with norm $1.$}:
\begin{equation}\label{eq:smallP} 
\|P\|_{L_\infty(0,\infty;\cM(\dot B^0_{n,1}\cap\dot B^0_{q,1}))} \leq\eta \nu,
\end{equation}
for all nonlinearities $f_0$ and $f_1$ fulfilling \eqref{eq:nonlinear} and
\begin{equation}\label{eq:growth1}
|df_0(\vec w)| \leq C|\vec w|, \qquad |df_1(\vec w)|\leq C,
\end{equation}
and for all $\vec u_0 \in \dot B^0_{n,1}\cap \dot  B^0_{q,1}$ such that
\begin{equation}\label{ap5}
 \|\vec u_0\|_{\dot B^0_{n,1}\cap \dot B^0_{q,1}} \leq c_\nu,
\end{equation}
System \eqref{ap1} admits a unique global solution $\vec u$ in the space
\begin{equation}\label{ap6}
\cC_b(0,\infty;\dot B^0_{n,1}\cap \dot B^0_{q,1})
\cap L_1(0,\infty; \dot B^2_{n,1}\cap \dot B^2_{q,1}).
\end{equation}
\end{theorem}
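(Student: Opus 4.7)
The plan is to apply Banach's fixed-point theorem in the solution space
\[
X := \bigl\{u \in \cC_b(\R^+;\dot B^0_{n,1}\cap\dot B^0_{q,1})\ :\ \partial_t u,\,\nabla^2 u \in L_1(\R^+;\dot B^0_{n,1}\cap\dot B^0_{q,1})\bigr\}
\]
endowed with the norm $\|u\|_X := \|u\|_{L_\infty(\dot B^0_{n,1}\cap\dot B^0_{q,1})} + \|(\partial_t u,\nu\nabla^2 u)\|_{L_1(\dot B^0_{n,1}\cap\dot B^0_{q,1})}$. The first thing to check is that the pairs $(s,p)=(0,n)$ and $(s',q)=(0,q)$ meet the hypotheses of Theorem \ref{th:heat-fin}: both $-1+1/n<0<1/n$ and $-1+1/q<0<1/q-2/n$ hold (the latter thanks to $q<n/2$), so the linear maximal regularity estimate is available with a constant independent of $\nu$ and of time.

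Next I would introduce the map $\Phi$ sending $u\in X$ to the solution $v$ of
\[
v_t - \nu\Delta v = \cN(u):= -P\cdot\nabla^2 u + f_0(u) + f_1(u)\cdot\nabla u,\quad v|_{\d\Omega}=0,\quad v|_{t=0}=u_0.
\]
Theorem \ref{th:heat-fin} then yields $\|\Phi(u)\|_X \leq C\bigl(\|u_0\|_{\dot B^0_{n,1}\cap\dot B^0_{q,1}} + \|\cN(u)\|_{L_1(\dot B^0_{n,1}\cap\dot B^0_{q,1})}\bigr)$. The term with $P$ is handled directly via \eqref{eq:smallP} and the definition of the multiplier norm:
\[
\|P\cdot\nabla^2 u\|_{L_1(\dot B^0_{n,1}\cap\dot B^0_{q,1})}\leq \|P\|_{L_\infty(\cM(\dot B^0_{n,1}\cap\dot B^0_{q,1}))}\|\nabla^2 u\|_{L_1(\dot B^0_{n,1}\cap\dot B^0_{q,1})}\leq \eta\,\|u\|_X,
\]
and is absorbed for $C\eta<1$.

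For the genuine nonlinearities I would exploit the structure $f_0(w) = g_0(w)\,w$ with $|g_0(w)|\leq C|w|$ (from \eqref{eq:nonlinear} and \eqref{eq:growth1}) and $|f_1(w)|\leq C|w|$. The fourth bullet of Proposition \ref{p:product} with $s=1$, combined with Proposition \ref{p:productdomain}, gives product estimates of the type $\|uv\|_{\dot B^0_{p,1}}\lesssim \|u\|_{\dot B^1_{n,1}}\|v\|_{\dot B^0_{p,1}}$ for $p\in\{n,q\}$. Together with the critical embedding $\dot B^1_{n,1}\hookrightarrow L_\infty$ and time interpolation between $L_\infty(\dot B^0_{n,1})$ and $L_1(\dot B^2_{n,1})$ (which yields $u\in L_2(\dot B^1_{n,1})$), a Cauchy--Schwarz-type argument is expected to deliver
\[
\|f_0(u)+f_1(u)\cdot\nabla u\|_{L_1(\dot B^0_{n,1}\cap\dot B^0_{q,1})}\leq \frac{C}{\nu}\|u\|_X^2.
\]
Combining the pieces yields $\|\Phi(u)\|_X\leq C\|u_0\|_{\dot B^0_{n,1}\cap\dot B^0_{q,1}}+C\eta\|u\|_X+C\nu^{-1}\|u\|_X^2$, so choosing $\eta$ small (independently of $\nu$) and $\|u_0\|_{\dot B^0_{n,1}\cap\dot B^0_{q,1}}\leq c_\nu$ small enough as a function of $\nu$ will stabilize a ball of radius $R\simeq\|u_0\|$ in $X$. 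The contraction property comes from rerunning the same estimates on $f_i(u_1)-f_i(u_2)=\bigl(\int_0^1 df_i(u_2+t(u_1-u_2))\,dt\bigr)\cdot(u_1-u_2)$.

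The main obstacle will be producing these product estimates in the endpoint target spaces $\dot B^0_{n,1}$ and $\dot B^0_{q,1}$: the convenient algebra estimate $\|uv\|_{b^s_{p,1}}\lesssim \|u\|_{L_\infty}\|v\|_{b^s_{p,1}}+\|v\|_{L_\infty}\|u\|_{b^s_{p,1}}$ of Proposition \ref{p:product} requires $s>0$ and is unavailable here. One has to spend one order of regularity on the most regular factor via the fourth bullet of Proposition \ref{p:product}, and recover it through the $L_1(\dot B^2)$ component of $\|u\|_X$ by interpolation. Controlling both components of the intersection simultaneously requires balancing exponents so that the parabolic gain in $q$ (which is precisely where $q<n/2$ intervenes) compensates for any loss; closing the $\dot B^0_{q,1}$ estimate for $f_1(u)\cdot\nabla u$ will likely be the most delicate book-keeping.
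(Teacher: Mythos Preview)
Your overall plan is exactly the paper's: Banach fixed point in the space $X$, with the linear engine supplied by Theorem~\ref{th:heat-fin}, the $P$-term absorbed via the multiplier hypothesis, and the nonlinear terms handled by product/composition estimates followed by time interpolation. You also correctly anticipate that the difficulty is the endpoint target $\dot B^0$.

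The concrete gap is in how you propose to deal with that endpoint. Using the fourth bullet of Proposition~\ref{p:product} with $s=1$ forces the composition estimate to be applied either in $\dot B^1_{n,1}$ (on $f_1(\vec u)$) or in $\dot B^0_{p,1}$ (on $f_1(\vec u)$ or on $g_0(\vec u)$), and Proposition~\ref{p:compo} is only valid for $s\in(0,1)$. Moreover, your factorization $f_0(\vec u)=g_0(\vec u)\,\vec u$ with $g_0(\vec w)=\int_0^1 df_0(t\vec w)\,dt$ produces a $g_0$ that is merely continuous under the $C^1$ assumption on $f_0$, so Proposition~\ref{p:compo} does not apply to it at all. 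Thus neither factor can be placed in a Besov space via the available composition lemma, and the $L_2(\dot B^1_{n,1})$ time interpolation you mention cannot rescue this.

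The paper's fix is simple and worth knowing. For $f_1(\vec u)\cdot\nabla\vec u$ it takes $s=1/2$ (not $s=1$) in the fourth bullet, obtaining
\[
\|f_1(\vec u)\cdot\nabla\vec u\|_{\dot B^0_{p,1}}\lesssim\|f_1(\vec u)\|_{\dot B^{1/2}_{n,1}}\|\nabla\vec u\|_{\dot B^{1/2}_{p,1}}
\lesssim\|\vec u\|_{\dot B^{1/2}_{n,1}}\|\vec u\|_{\dot B^{3/2}_{p,1}},
\]
where now Proposition~\ref{p:compo} applies legitimately at $s=1/2$; H\"older with $L_4\times L_{4/3}$ in time and interpolation close the estimate. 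For $f_0(\vec u)$ the paper does \emph{not} factor: it composes directly at a slightly positive index and then embeds. For the $\dot B^0_{n,1}$ piece it uses $\dot B^1_{n/2,1}\hookrightarrow\dot B^{0^+}_{n^-,1}\hookrightarrow\dot B^0_{n,1}$ together with Proposition~\ref{p:compo} at that small positive $s$; for the $\dot B^0_{q,1}$ piece it interpolates between $\dot B^{0^+}_{q,1}$ (handled by composition) and $L_{q^-}$ (handled by the pointwise bound $|f_0(\vec u)|\leq C|\vec u|^2$), using $L_{q^-}\hookrightarrow\dot B^{0^-}_{q,\infty}$. The condition $q<n/2$ enters precisely when one needs $\dot B^2_{q,1}\cap\dot B^2_{n,1}\hookrightarrow L_{\infty^-}$ and $\dot B^1_{q,1}\cap\dot B^1_{n,1}\hookrightarrow\dot B^1_{n/2,1}$.
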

\begin{proof} 
Granted with Theorem \ref{th:heat-fin}, 
 the result mainly relies on embedding, composition and  and product estimates in Besov spaces.
We focus on the proof of a priori estimates for a global solution $\vec u$ to  \eqref{ap1}.
 First,  applying Theorem \ref{th:heat-fin} yields
\begin{multline}\label{ap6a}
 \|\vec u\|_{L_\infty(0,\infty;\dot B^0_{n,1}\cap \dot B^0_{q,1})
\cap L_1(0,\infty;\dot B^2_{n,1}\cap \dot B^2_{q,1})} 
\lesssim \|P\|_{L_\infty(0,\infty;\cM(\dot B^0_{n,1}\cap\dot B^0_{q,1}))} \|\vec u \|_{L_1(0,\infty;\dot B^2_{n,1}\cap \dot B^2_{q,1})}\\
+\|\vec u_0\|_{\dot B^0_{n,1}\cap \dot B^0_{q,1}}+\|f_0(\vec u)\|_{L_1(0,\infty;\dot B^0_{n,1}\cap \dot B^0_{q,1})}
+\|f_1(\vec u)\cdot \nabla \vec u\|_{L_1(0,\infty;\dot B^0_{n,1}\cap \dot B^0_{q,1})}.
\end{multline}
Bounding  the last two terms follows from Propositions \ref{p:product} and  \ref{p:compo}. More precisely, for $p=q,n,$ we have
$$\begin{array}{lll}
\|f_1(\vec u)\cdot\nabla \vec u\|_{\dot B^0_{p,1}}&\lesssim&
 \|f_1(\vec u)\|_{\dot B^{1/2}_{n,1}}\|\nabla\vec u\|_{\dot B^{1/2}_{p,1}}\\
  &\lesssim&
 \|\vec u\|_{\dot B^{1/2}_{n,1}}\|\nabla\vec u\|_{\dot B^{1/2}_{p,1}}.\end{array}
  $$
  Therefore, applying H\"older inequality,
  $$\|f_1(\vec u)\cdot\nabla \vec u\|_{L_1(0,\infty;\dot B^0_{p,1})}
  \lesssim 
    \|\vec u\|_{L_4(0,\infty;\dot B^{1/2}_{n,1})}\|\nabla\vec u\|_{L_{4/3}(0,\infty;\dot B^{1/2}_{p,1})},
    $$
    whence, using elementary interpolation,
\begin{equation}\label{ap17}
 \|f_1(\vec u)\cdot\nabla\vec u\|_{L_1(0,\infty;\dot B^0_{n,1}\cap \dot B^0_{q,1})} \lesssim
 \|\vec u\|^2_{L_\infty(0,\infty;\dot B^0_{n,1}\cap \dot B^0_{q,1})
\cap L_1(0,\infty;\dot B^2_{n,1}\cap \dot B^2_{q,1})}.
\end{equation}
Bounding $f_0(\vec u)$ is slightly more involved. To handle the norm in $L_1(0,\infty;\dot B^0_{n,1}(\Omega)),$
we use the following critical embedding:
$$\dot B^1_{n/2,1}\hookrightarrow \dot B^{0^+}_{n^-,1}\hookrightarrow \dot B^0_{n,1}.$$
 Hence Proposition \ref{p:product} enables us to write that
$$
\begin{array}{lll}
\|f_0(\vec u)\|_{\dot B^0_{n,1}}&\lesssim&\|f_0(\vec u)\|_{\dot B^{0^+}_{n^-,1}},\\
&\lesssim& \|\vec u\|_{L_\infty}\|\vec u\|_{\dot B^{0^+}_{n^-,1}},\\
&\lesssim& \|\vec u\|_{L_\infty}\|\vec u\|_{\dot B^{1}_{n/2,1}},\\
&\lesssim& \|\vec u\|_{\dot B^1_{n,1}}\|\vec u\|_{\dot B^{1}_{q,1}\cap \dot B^1_{n,1}}.
\end{array}
$$
The last inequality stems from the embedding $\dot B^1_{n,1}\hookrightarrow L_\infty$
and from the fact that $q<n/2<n,$ whence
$$
\dot B^{1}_{q,1}\cap \dot B^1_{n,1}\hookrightarrow \dot B^1_{n/2,1}.
$$
Therefore, using H\"older inequality and elementary interpolation, we deduce that 
\begin{equation}\label{ap10}
 \|f_0(\vec u) \|_{L_1(0,\infty;\dot B^0_{n,1})} \lesssim \|\vec u\|^2_{L_\infty(0,\infty;\dot B^0_{n,1}\cap \dot B^0_{q,1})
\cap L_1(0,\infty;\dot B^2_{n,1}\cap \dot B^2_{q,1})}.
\end{equation}
Finally  we have to bound $f_0(\vec u)$ in $L_1(0,\infty;\dot B^0_{q,1}).$ 
For that it suffices to estimate it in $L_1(0,\infty;\dot B^{0^+}_{q,1})$ and in $L_1(0,\infty;L_{q^-}).$
Indeed we observe that $L_{q^-}\hookrightarrow \dot B^{0^-}_{q,\infty},$ and thus
\begin{equation}\label{ap11}
L_1(0,\infty;\dot B^{0^+}_{q,1})\cap L_1(0,\infty;L_{q^-})\hookrightarrow
L_1(0,\infty;\dot B^0_{q,1}).
\end{equation}
Now, on the one hand, according to  Proposition \ref{p:compo} and H\"older inequality we have
$$
\|f_0(\vec u)\|_{L_1(0,\infty;\dot B^{0^+}_{q,1})}\lesssim 
\|\vec u\|_{L_{1^+}(0,\infty;L_{\infty})}\|\vec u\|_{L_{\infty^-}(0,\infty;\dot B^{0^+}_{q,1})}.
$$
By interpolation, we easily get 
$$
\|\vec u\|_{L_{\infty^-}(0,\infty;\dot B^{0^+}_{q,1})}\lesssim
 \|\vec u\|_{L_{\infty}(0,\infty;\dot B^{0}_{q,1})\cap L_1(0,\infty;\dot B^2_{q,1})}
$$
and because $q<n/2,$ 
$$
\begin{array}{lll}
\|\vec u\|_{L_{1^+}(0,\infty;L_{\infty})}&\lesssim&\|\vec u\|_{L_1(0,\infty;\dot B^0_{\infty,1})
\cap L_2(0,\infty;\dot B^0_{\infty,1})},\\
&\lesssim&\|\vec u\|_{L_1(0,\infty;\dot B^2_{n/2,1})\cap L_2(0,\infty;\dot B^1_{n,1})},\\
&\lesssim&\|\vec u\|_{L_1(0,\infty;\dot B^2_{q,1}\cap \dot B^2_{n,1})\cap L_1(0,\infty;\dot B^2_{n,1})
\cap L_\infty(0,\infty;\dot B^0_{n,1})}.
\end{array}
$$
Therefore we have, 
\begin{equation}\label{ap12}
\|f_0(\vec u)\|_{L_1(0,\infty;\dot B^{0^+}_{q,1})}\lesssim 
\|\vec u\|_{L_1(0,\infty;\dot B^2_{q,1}\cap \dot B^2_{n,1})
\cap L_\infty(0,\infty;\dot B^0_{q,1}\cap\dot B^0_{n,1})}^2.
\end{equation}
On the other hand, using the fact that $|f_0(\vec u)|\leq C|\vec u|^2$ and H\"older inequality, we may write
$$
\|f_0(\vec u)\|_{L_1(0,\infty;L_{q^-})}\leq\|\vec u\|_{L_\infty(0,\infty;L_q)}\|\vec u\|_{L_1(0,\infty;L_{\infty^-})}.
$$
We obviously have $\dot B^0_{q,1}\hookrightarrow L_q$ and, because $2-n/q<0,$
$$
\dot B^2_{q,1}\cap \dot B^2_{n,1}\hookrightarrow L_{\infty^-}.
$$
Therefore
\begin{equation}\label{ap13}
\|f_0(\vec u)\|_{L_1(0,\infty;L_{q^-})}\lesssim \|\vec u\|_{L_\infty(0,\infty;\dot B^0_{q,1})}
\|\vec u\|_{L_1(\dot B^2_{q,1}\cap \dot B^2_{n,1})}.
\end{equation}
So putting \eqref{ap12} and \eqref{ap13} together and taking advantage of \eqref{ap11}, 
we end up with 
\begin{equation}\label{ap14}
 \|f_0(\vec u) \|_{L_1(0,\infty;\dot B^0_{q,1})} \lesssim \|\vec u\|^2_{L_\infty(0,\infty;\dot B^0_{n,1}\cap \dot B^0_{q,1})
\cap L_1(0,\infty;\dot B^2_{n,1}\cap \dot B^2_{q,1})}.
\end{equation}
It is now time to plug \eqref{ap17}, \eqref{ap10} and \eqref{ap14} in 
\eqref{ap6a}. We get
\begin{multline}\label{ap18}
 \|\vec u\|_{L_\infty(0,\infty;\dot B^0_{n,1}\cap \dot B^0_{q,1})
\cap L_1(0,\infty;\dot B^2_{n,1}\cap \dot B^2_{q,1})} \\
\leq C( \|P\|_{L_\infty(0,\infty;\cM(\dot B^0_{n,1}\cap\dot B^0_{q,1}))} \|\vec u \|_{L_1(0,\infty;\dot B^2_{n,1}\cap \dot B^2_{q,1})}\\
+\|\vec u\|^2_{L_\infty(0,\infty;\dot B^0_{n,1}\cap \dot B^0_{q,1})
\cap L_1(0,\infty;\dot B^2_{n,1}\cap \dot B^2_{q,1})}+ \|\vec u_0\|_{\dot B^0_{n,1}\cap \dot B^0_{q,1}}).
\end{multline}

Obviously, the above estimate enables us to get a \emph{global-in-time} control of the solution in the desired functional space
whenever  \eqref{eq:smallP} and \eqref{ap5} are satisfied.
Starting from this observation and using the existence part of  Theorem \ref{th:heat-fin}, 
it is easy to prove Theorem \ref{th:non} by means of Banach fixed point theorem as in \cite{DM-jfa} for instance.
The details are left to the reader.
\end{proof}

\begin{theorem}\label{th:nonbis} Assume that $P\equiv0$ and that $f_1\equiv0.$
 Suppose that  $f_0$ satisfies \eqref{eq:nonlinear} and 
$$
|df_0(\vec w)|\leq C(|\vec w|^{m-1} + |\vec w|) \mbox{ for some } m \geq 2.
$$
Let  $1<q<\frac n2$ and   $q\leq p<\infty.$ Assume that 
$$
s_p:=\frac np-\frac2{m-1}\in\biggl(0,\frac1p\biggr)\quad\hbox{and}\quad 0<s_q<\frac1q-\frac2n\cdotp
$$
Then  there exists a constant $c_\nu$ such that  if
\begin{equation}\label{ap5a}
 \|\vec u_0\|_{\dot B^{s_p}_{p,1}\cap \dot B^{s_q}_{q,1}} \leq c_\nu
\end{equation}
then System \eqref{ap1} admits a unique  global-in-time  solution $\vec u$ such that
\begin{equation}\label{ap6aa}
\vec  u \in \cC_b(0,\infty;\dot B^{s_p}_{p,1}\cap \dot B^{s_q}_{q,1})
\cap L_1(0,\infty; \dot B^{2+s_p}_{p,1}\cap \dot B^{2+s_q}_{q,1}).
\end{equation}
\end{theorem}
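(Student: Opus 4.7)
The plan is to follow the strategy of Theorem \ref{th:non}: apply Theorem \ref{th:heat-fin} with the two Besov scales $(s_p,p)$ and $(s_q,q)$ to get the linear a priori estimate, bound the nonlinear term $f_0(\vec u)$ superlinearly in $\|\vec u\|_E$, and close a contraction scheme in
$$
E:=\cC_b(0,\infty;\dot B^{s_p}_{p,1}\cap \dot B^{s_q}_{q,1})\cap L_1(0,\infty;\dot B^{2+s_p}_{p,1}\cap \dot B^{2+s_q}_{q,1}).
$$
First I would verify that the hypotheses on $(s_p,p)$ and $(s_q,q)$ fit Theorem \ref{th:heat-fin}: $s_p\in(0,1/p)\subset(-1+1/p,1/p)$, $s_q\in(0,1/q-2/n)$, and $1<q\leq p<\infty$ with $q<n/2$, all of which are assumed. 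Applying that theorem to $\vec u_t-\nu\Delta\vec u=f_0(\vec u)$ with $\vec u|_{t=0}=\vec u_0$ yields at once
$$
\|\vec u\|_E\leq C\bigl(\|\vec u_0\|_{\dot B^{s_p}_{p,1}\cap \dot B^{s_q}_{q,1}}+\|f_0(\vec u)\|_{L_1(0,\infty;\dot B^{s_p}_{p,1}\cap \dot B^{s_q}_{q,1})}\bigr).
$$

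The main step is to bound the nonlinear term. Since both $s_p$ and $s_q$ lie in $(0,1)$, the finite-difference argument underlying Proposition \ref{p:compo} adapts verbatim to the mixed growth $|df_0(\vec w)|\leq C(|\vec w|^{m-1}+|\vec w|)$ and produces, for $(\sigma,r)\in\{(s_p,p),(s_q,q)\}$,
$$
\|f_0(\vec u)\|_{\dot B^{\sigma}_{r,1}}\lesssim \bigl(\|\vec u\|_{L_\infty}^{m-1}+\|\vec u\|_{L_\infty}\bigr)\|\vec u\|_{\dot B^{\sigma}_{r,1}}.
$$
Integrating in time with H\"older reduces everything to controlling $\|\vec u\|_{L_{m-1}(0,\infty;L_\infty)}$ and $\|\vec u\|_{L_1(0,\infty;L_\infty)}$ in terms of $\|\vec u\|_E$. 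Here the identity $s_p+2/(m-1)=n/p$, built into the definition of $s_p$, plays the decisive role: interpolation of Besov norms (Proposition \ref{p:interpolation}) gives $\|\vec u\|_{\dot B^{n/p}_{p,1}}\lesssim \|\vec u\|_{\dot B^{s_p}_{p,1}}^{1-1/(m-1)}\|\vec u\|_{\dot B^{2+s_p}_{p,1}}^{1/(m-1)}$, so the critical Sobolev embedding $\dot B^{n/p}_{p,1}\hookrightarrow L_\infty$ combined with time H\"older yields $\|\vec u\|_{L_{m-1}(L_\infty)}\lesssim \|\vec u\|_E$; the subcritical embedding $\dot B^{2+s_p}_{p,1}\hookrightarrow L_\infty$, available since $2+s_p-n/p=2-2/(m-1)\geq 0$, gives $\|\vec u\|_{L_1(L_\infty)}\lesssim \|\vec u\|_E$. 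Note that the $q$-scale estimate uses \emph{the very same} $L_\infty$-control coming from the $p$-scale, which is precisely why both scales must be carried along.

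Combining these ingredients produces the closed inequality
$$
\|\vec u\|_E\leq C\|\vec u_0\|_{\dot B^{s_p}_{p,1}\cap \dot B^{s_q}_{q,1}}+C\bigl(\|\vec u\|_E^m+\|\vec u\|_E^2\bigr),
$$
which gives \emph{global} control under the smallness assumption \eqref{ap5a}. The main obstacle is really bookkeeping: aligning the critical identity $s_p+2/(m-1)=n/p$ with the constraint $s_p\in(0,1/p)$ (which forces the admissible range of $p$), checking the embedding $\dot B^{2+s_p}_{p,1}\hookrightarrow L_\infty$ (which needs $m\geq 2$), and making sure that $(s_q,q)$ lies both in the range of Theorem \ref{th:heat-fin} and in that of Proposition \ref{p:compo}. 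Once the a priori estimate is established, existence and uniqueness follow from a standard Banach fixed-point argument applied to the affine map $\vec v\mapsto \vec u$ defined by $\vec u_t-\nu\Delta\vec u=f_0(\vec v)$, $\vec u|_{t=0}=\vec u_0$; the difference estimate is controlled through the Lipschitz-type bound $|f_0(\vec u)-f_0(\vec v)|\leq C(|\vec u|^{m-1}+|\vec v|^{m-1}+|\vec u|+|\vec v|)|\vec u-\vec v|$ combined with the same interpolation machinery, exactly as in \cite{DM-jfa}.
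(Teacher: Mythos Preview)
Your overall strategy matches the paper's: apply Theorem \ref{th:heat-fin}, use Proposition \ref{p:compo} to get
\[
\|f_0(\vec u)\|_{\dot B^{\sigma}_{r,1}}\lesssim \bigl(\|\vec u\|_{L_\infty}^{m-1}+\|\vec u\|_{L_\infty}\bigr)\|\vec u\|_{\dot B^{\sigma}_{r,1}},
\]
and then control $\|\vec u\|_{L_{m-1}(0,\infty;L_\infty)}$ and $\|\vec u\|_{L_1(0,\infty;L_\infty)}$ in terms of $\|\vec u\|_E$. Your treatment of the $L_{m-1}(L_\infty)$ term via the critical identity $s_p+2/(m-1)=n/p$ and the embedding $\dot B^{n/p}_{p,1}\hookrightarrow L_\infty$ is exactly what the paper does.

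The gap is in your bound for $\|\vec u\|_{L_1(0,\infty;L_\infty)}$. You invoke ``the subcritical embedding $\dot B^{2+s_p}_{p,1}\hookrightarrow L_\infty$, available since $2+s_p-n/p=2-2/(m-1)\geq0$''. This is false for \emph{homogeneous} Besov spaces as soon as $m>2$: when $2+s_p>n/p$, the space $\dot B^{2+s_p}_{p,1}$ does \emph{not} embed in $L_\infty$ (low frequencies are uncontrolled; only the endpoint $s=n/p$ gives $\dot B^{s}_{p,1}\hookrightarrow L_\infty$). So for $m>2$ your argument for $L_1(L_\infty)$ breaks down, and with it the closed estimate. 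The paper repairs precisely this point by bringing in the $q$-scale: one has $\dot B^{2+s_q}_{q,1}\hookrightarrow \dot B^{2+s_q-n/q}_{\infty,1}$ with $2+s_q-n/q<0$ (this uses $q<n/2$ and $s_q<1/q-2/n$), while $\dot B^{2+s_p}_{p,1}\hookrightarrow \dot B^{2-2/(m-1)}_{\infty,1}$ with nonnegative index; interpolating between a negative and a nonnegative index on the $\dot B^{\cdot}_{\infty,1}$ scale yields
\[
\dot B^{2+s_q}_{q,1}\cap \dot B^{2+s_p}_{p,1}\hookrightarrow \dot B^{0}_{\infty,1}\hookrightarrow L_\infty,
\]
hence $\|\vec u\|_{L_1(0,\infty;L_\infty)}\lesssim \|\vec u\|_{L_1(0,\infty;\dot B^{2+s_p}_{p,1}\cap\dot B^{2+s_q}_{q,1})}\leq\|\vec u\|_E$. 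In other words, the $q$-scale is needed not only to feed Theorem \ref{th:heat-fin} but also, crucially, to recover the $L_1(L_\infty)$ control when $m>2$. Once you make this correction, the rest of your outline (closed inequality, smallness, Banach fixed point) goes through as in the paper.
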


\begin{proof}
Once again, we start from  Theorem \ref{th:heat-fin} which implies the following inequality:
\begin{multline}\label{x1}
 \|\vec u\|_{L_\infty(0,\infty;\dot B^{s_p}_{p,1}\cap \dot B^{s_q}_{q,1})\cap 
L_1(0,\infty;\dot B^{2+s_p}_{p,1}\cap \dot B^{2+s_q}_{q,1})} 
\lesssim\|f_0(\vec u)\|_{L_1(0,\infty;\dot B^{s_p}_{p,1}\cap \dot B^{s_q}_{q,1})}
+\|\vec u_0\|_{\dot B^{s_p}_{p,1}\cap \dot B^{s_q}_{q,1}}.
\end{multline}
Now (a slight generalization of) Proposition \ref{p:compo} ensures that for $s=s_p,s_q$ and for $r=p,q,$
$$
\|f_0(\vec u)\|_{\dot B^{s}_{r,1}}\lesssim \bigl(\|\vec u\|_{L_\infty}+\|\vec u\|_{L_{\infty}}^{m-1}\bigr)
\|\vec u\|_{\dot B^{s}_{r,1}}.
$$
Therefore,
\begin{multline}\label{x2}
 \|f_0(\vec u)\|_{L_1(0,\infty;\dot B^{s_p}_{p,1}\cap \dot B^{s_q}_{q,1}} \leq 
C\bigl( \| \vec u \|_{L_1(0,\infty;L_\infty)} \\+
\| \vec u \|_{L_{m-1}(0,\infty;L_\infty)}^{m-1}\bigr)\|\vec u\|_{L_\infty(0,\infty;\dot B^{s_p}_{p,1}\cap \dot B^{s_q}_{q,1})}.
\end{multline}
Hence it is only a matter of proving that the norm of $\vec u$ in $L_1(0,\infty;L_\infty)$
and in $L_{m-1}(0,\infty;L_\infty)$ may be bounded by means 
of the norm in $L_1(0,\infty;\dot B^2_{q,1}\cap \dot B^2_{n,1})\cap L_\infty(0,\infty;\dot B^0_{q,1}\cap\dot B^0_{n,1}).$
Now, we notice that
$\dot B^{s_p+2/(m-1)}_{p,1}$ embeds continuously in $L_\infty$ and that, by interpolation,
$$
\|\vec u\|_{L_{m-1}(0,\infty;\dot B^{s_p+2/(m-1)}_{p,1})}\leq
\|\vec u\|_{L_\infty(0,\infty;\dot B^{s_p}_{p,1})\cap L_1(0,\infty;\dot B^{s_p+2}_{p,1})}.
$$
Hence we do have 
\begin{equation}\label{x3}
\| \vec u \|_{L_{m-1}(0,\infty;L_\infty)}\lesssim \|\vec u\|_{L_\infty(0,\infty;\dot B^{s_p}_{p,1})\cap L_1(0,\infty;\dot B^{s_p+2}_{p,1})}.
\end{equation}
Finally, we notice that 
$\dot B^2_{s_q+2,1}\hookrightarrow \dot B^{2+s_q-n/q}_{\infty,1}$
and that $2+s_q-n/q<0$. At the same time $\dot B^{s_p+2}_{p,1}\hookrightarrow \dot B^1_{\infty,1},$
therefore 
$$
\dot B^{s_q+2}_{q,1}\cap \dot B^{s_p+2}_{p,1}\hookrightarrow L_\infty.
$$
Hence we have 
\begin{equation}\label{x4}
\| \vec u \|_{L_{1}(0,\infty;L_\infty)}\lesssim \|\vec u\|_{L_1(0,\infty;\dot B^{s_p+2}_{p,1})\cap L_1(0,\infty;\dot B^{s_q+2}_{q,1})}.
\end{equation}

Putting \eqref{x3} and \eqref{x4} into \eqref{x2}  and then into \eqref{x1} we get
$$\displaylines{
 \|\vec u\|_{L_\infty(0,\infty;\dot B^{s_p}_{p,1}\cap \dot B^{s_q}_{q,1})\cap 
L_1(0,\infty;\dot B^{2+s_p}_{p,1}\cap \dot B^{2+s_q}_{q,1})} 
\lesssim \|\vec u_0\|_{\dot B^{s_p}_{p,1}\cap \dot B^{s_q}_{q,1}}\hfill\cr\hfill
+(\|\vec u\|_{L_\infty(0,\infty;\dot B^{s_p}_{p,1}\cap \dot B^{s_q}_{q,1})\cap 
L_1(0,\infty;\dot B^{2+s_p}_{p,1}\cap \dot B^{2+s_q}_{q,1})} ^{m-2}+1)
\|\vec u\|_{L_\infty(0,\infty;\dot B^{s_p}_{p,1}\cap \dot B^{s_q}_{q,1})\cap 
L_1(0,\infty;\dot B^{2+s_p}_{p,1}\cap \dot B^{2+s_q}_{q,1})} ^2.}
$$
The smallness of the initial data in \eqref{ap6a}  enables to close the estimate for 
the left-hand side of  the above inequality. 
The existence issue is just a consequence of Banach fixed point theorem. This completes the proof of the theorem.
\end{proof}

\begin{remark} Even though System \eqref{ap1} does not have any scaling invariance in general, 
our two statements are somewhat critical from the regularity point of view. 
Indeed,  in the functional framework used in Theorem \ref{th:non} and under  the growth 
condition \eqref{eq:growth1}, the nonlinearity $f_0(\vec u)$ is lower order
compared to $f_1(\vec u)\cdot\nabla \vec u.$ Now, we notice that if $f_0\equiv0$ and $P\equiv0$
then the initial value problem for System \eqref{ap1} 
(in the $\R^n$ case) is invariant for all $\lambda>0$ under the transform:
$$
(u(t,x),u_0(x))\longrightarrow \lambda(u(\lambda^2t,\lambda x),u_0(\lambda x)).
$$
At the same time, the norm $\|\cdot\|_{\dot B^0_{n,1}(\R^n)}$
is invariant by the above rescaling for $u_0.$
\medbreak
As regards Theorem \ref{th:nonbis}, the nonlinearity $f_0(\vec w)$ is at most of order $m.$
Now, if (the coefficients of) $f_0(\vec w)$ are homogeneous polynomials of degree $m$ then 
the system is invariant by  
$$
(u(t,x),u_0(x))\longrightarrow \lambda^{\frac2{m-1}}(u(\lambda^2t,\lambda x),u_0(\lambda x)).
$$
Hence the regularity  $\dot B^{s_p}_{p,1}$ is critical. 
\end{remark}

\begin{acknowledgement}
 The second author has been supported by the MN grant IdP2011 000661.
\end{acknowledgement}

\end{document}